\newtheorem{thm}{Theorem}
\newtheorem{lemma}{Lemma}
\theoremstyle{definition}
\newtheorem{statement}{Proposition}
\newtheorem{ip}{Inverse problem}
\newtheorem{alg}{Algorithm}
\begin{document}

\begin{center} \bf \Large
Inverse problem for Sturm--Liouville operators  \\
with frozen argument on closed sets
\end{center}

\begin{center}
Maria Kuznetsova \\
Saratov State University (SSU) \\
email: kuznetsovama@info.sgu.ru
\end{center}

{\bf Abstract.}
In the paper, we study the problem of recovering the potential from the spectrum of the Dirichlet boundary value problem for a Sturm--Liouville equation with frozen argument on a closed set.
We consider the case when the closed set consists of two segments and the frozen argument is at the end of the first segment.
A uniqueness theorem and an algorithm solving the inverse problem are obtained along with necessary and sufficient conditions of its solvability.
The considered case  significantly differs from the one of the classical Sturm--Liouville operator with frozen argument.

{\it Keywords:} inverse spectral problems, frozen argument, Sturm--Liouville operators, time scales, closed sets.

{ \it 2010 Mathematics Subject Classification:} 34K29, 34B24, 34N05.

\medskip

{\color{blue} This is a translation of the paper published in russian: \\
M. A. Kuznetsova, "Inverse problem for the Sturm--Liouville operator with a frozen argument on the time scale",  Itogi Nauki i Tekhniki. Ser. Sovrem. Mat. Pril. Temat. Obz. {\bf 208} (2022), pp. 49--62.
{https://doi.org/10.36535/0233-6723-2022-208-49-62}
}
%
%\UDC{517.984, 517.927}
%

\section{Introduction}
The aim of the paper is to study an inverse spectral problem for a Sturm--Liouville operator with frozen argument on a closed set $T \subseteq \mathbb R.$
Differential operators on closed sets generalize classical differential and difference operators since they contain $\Delta$-derivative, see~\cite{[1], [2], Hilger90}.
%Впервые данное понятие появилось в статье \cite{Hilger90}.

Recently, interest has appeared in inverse spectral problems for differential operators on closed sets, see~\cite{ozkan1, ozkan2, yurko2019, y2-structure, uniqueness, ozkan3, mathnotes}. Such problems consist in recovering operators from their spectral characteristics. The most complete results in this direction were obtained for the classical Sturm--Liouville operator on an interval, see~\cite{[3], [4], [5]}.
Posing and studying inverse problems substantially depend on the structure of the considered closed set, which leads to the need for some or other restrictions.
By the present time, the most general form of the sets on which the solution of an inverse problem has been obtained is a finite union of segments and isolated points, see~\cite{uniqueness,mathnotes}.

Inverse problems for the Sturm--Liouville operator with frozen argument on an interval were studied in the series of the works~\cite{vasiliev, BondButVas, wang-froz, frozen, albevirio, nizh, bond-nodal}.
This operator is determined by the Sturm--Liouville differential expression with frozen argument
\begin{equation*}\label{1.1}
\ell y(x) :=-y''(x)+q(x)y(\gamma), \quad 0<x<r,
\end{equation*}
where $\gamma \in [0, r]$ is fixed.
Unlike the classical Sturm--Liouville operator, operators with frozen argument are non-local. For this reason, the methods of the classical inverse problems theory~\cite{[3], [4], [5]} are inapplicable for them. Meanwhile, non-local operators have applications in many areas of mathematics and natural science, see~\cite{loaded, myshkis, hale}.

The problem of recovering the potential $q$ from the spectrum of the boundary value problem
\begin{equation*}\label{1.2}
\ell y = \lambda y, \quad y^{(\alpha)}(0)=y^{(\beta)}(r)=0, \quad \alpha, \beta \in \{0, 1\},
\end{equation*}
was investigated in the works~\cite{vasiliev, BondButVas, wang-froz, frozen}.
In~\cite{frozen}, the case of arbitrary $\gamma/r \in {\mathbb Q}$ was studied and a complete description of the so-called non-degenerate and degenerate cases was given with respect to the values of the three parameters $\gamma / r,$ $\alpha $ and $ \beta.$
In particular, the Dirichlet boundary conditions ($\alpha=\beta=0$) correspond to the degenerate case for any $\gamma/r \in {\mathbb Q}.$
In the non-degenerate case, the potential is uniquely recovered from the spectrum, while in the degenerate one, additional information is required for uniqueness of recovery, e.g., specification of $q$ on a part of the interval.
Necessary and sufficient conditions of the inverse problem solvability were also obtained. In the non-degenerate case, the latter ones include only the spectrum asymptotics
\begin{equation*}
\lambda_n = \frac{\pi^2}{r^2}\bigg(n - \frac{\alpha+\beta}{2} + \frac{\kappa_n}{n}\bigg)^2, \quad n \ge 1, \quad \{ \kappa_n \}_{n \ge 1} \in l_2.
\end{equation*}
In the degenerate case, the condition is added for the coincidence of some eigenvalues infinite part with eigenvalues of the corresponding operator with zero potential.
For the irrational case $\gamma/r \notin {\mathbb Q},$  in the work~\cite{wang-froz}, uniqueness of the potential recovery from the spectrum was proved for any $\alpha$ and $\beta.$
In the matter of inverse spectral problems for operators with frozen argument on closed sets, they have not been considered previously.

In this paper, we consider the boundary value problem for the Sturm--Liouville equation with frozen argument on the closed set $T$ of a special form:
\begin{equation}
-y^{\Delta\Delta}(t) + y(\gamma) q(t) = \lambda y(\sigma(t)),  \quad t \in T,\label{*}\end{equation}
\begin{equation} y(0) = y(b) = 0,\label{Dirichlet}\end{equation}
where
\begin{equation} \label{T}
T=[0, \gamma] \cup [a, b], \quad d := a - \gamma, \; l := b - a. \end{equation}
Structure~\eqref{T} belongs to the simplest ones that allow detecting considerable differences from the case of a segment.

In the paper, we study recovering the potential $q \in C(T)$ from the spectrum of boundary value problem~\eqref{*}--\eqref{Dirichlet}.
We establish the conditions on the quantities $d,$ $\gamma,$ and $l$ under which a uniqueness theorem for the inverse problem solution holds, see Theorem~\ref{uniqueness}.
In particular, uniqueness of recovery holds if $l = k \gamma,$ $k \in \mathbb N.$ Here, there is a difference from the case of the equation with frozen argument on a segment, i.e. for $\gamma = a$ and $r = a + l,$ in which the potential can not be uniquely reconstructed from the spectrum of the Dirichlet boundary value problem for any rational $k.$
As well we obltain an algorithm for recovering the potential along with necessary and sufficient conditions of inverse problem solvability, see Algorithm~\ref{algorithm} and Theorem~\ref{conditions}.
Special form~\eqref{2} of the characteristic function considerably complicates the study as compared with the case of a segment.  In particular, the spectrum characterization is not limited to asymptotics alone, in contrast to the non-degenerate case of the operator on an interval.

%%%%%%%%%%%%%%%%%%%%%%%%%%%%%%%%%
\section{Inverse problem statement. A uniqueness theorem}
%%%%%%%%%%%%%%%%%arxiv version
For convenience of the reader, we first provide necessary notions from the theory of differential equations with $\Delta$-derivative~\cite{[1],[2]}. Let $T$ be a closed subset of the real line.
Define on $T$ the jump functions $\sigma$ and $\sigma_-$  in the following way:
$$
\sigma(x):=\left\{\begin{array}{cl}\inf \{s\in T:\; s>x\}, & x\ne \max T,\\[2mm]
\max T, & x=\max T,
\end{array}\right.
$$
$$
\sigma_-(x):=\left\{\begin{array}{cl}\sup \{s\in T:\; s<x\}, & x\ne \min T,\\[2mm]
\min T, & x=\min T.
\end{array}\right.
$$
A point $x\in T$ is called left-dense, left-isolated, right-dense, and right-isolated, if $\sigma_-(x)=x,$
$\sigma_-(x)<x,$ $\sigma(x)=x,$ and $\sigma(x)>x,$ respectively. If $\sigma_-(x)<x<\sigma(x),$ then $x$ is called {\it isolated}; if
$\sigma_-(x)=x=\sigma(x),$ then $x$ is called {\it dense}.

Denote $T^0:=T\setminus\{\max T\},$ if $\max T$  exists and is left-isolated, and $T^0:=T,$ otherwise. We also denote by $C(B)$ the class of
functions continuous on a subset $B \subseteq T.$

A function $f$ on $T$ is called $\Delta$-differentiable at $t \in T^0,$ if for any $\varepsilon>0$ there exists $\delta>0$ such that
$$
|f(\sigma(t))-f(s)-f^{\Delta}(t)(\sigma(t)-s)|\le\varepsilon |\sigma(t)-s|
$$
for all $s\in (t-\delta, t+\delta)\cap T.$ The value $f^{\Delta}(t)$ is called the $\Delta$-derivative of the function $f$ at the point~$t.$
%%%%%%%%%%%%%%%%%%%%%

Consider equation~\eqref{*} on closed set~\eqref{T} with a continuous potential
$q \in C(T).$ Note that for $T$ of form~\eqref{T} we have $T^0 = T.$
We denote by $C_\Delta^n(T),$ $ n \in \mathbb{N},$ the class of
functions having the $n$-th continuous on $T$ $\Delta$-derivative.
Define solutions as functions $y \in C_\Delta^2(T)$ for which identity~\eqref{*} holds. Each $\lambda$ for which there exist a non-zero solution $y$ satisfying the Dirichlet conditions~\eqref {Dirichlet} is called an eigenvalue.

Since the closed set $T$ has form~\eqref{T}, for any $ f \in C_\Delta^1(T),$
we have
\begin{equation} \label{derivative}
f^\Delta(t) =
\left\{\begin{array}{cc} \displaystyle \frac{f(a) - f(\gamma)}{d}, & t = \gamma,\\[2mm]
f'(t),  & t \in [0, \gamma]\cup[a, b],
\end{array}\right.\end{equation}
see~\cite{yurko2019, y2-structure}.
In~\eqref{derivative}, the classical derivative $f '(t)$ exists, and the equality $f'(\gamma) = f^\Delta(\gamma)$ holds due to continuity.
Applying formula~\eqref{derivative} to a solution $y$ of equation~\eqref{*} and its $\Delta$-derivative, we obtain that~\eqref{*} is equivalent to the system of equations
\begin{equation}
\left\{
\begin{array}{c}
-y''(x_1) + q(x_1) y(\gamma) = \lambda y(x_1), \quad x_1 \in [0, \gamma]; \\
-y''(x_2) + q(x_2) y(\gamma) = \lambda y(x_2), \quad x_2 \in [a, b],
\end{array}\right.
\label{system}
\end{equation}
with the jump conditions
\begin{equation} \label{jump conditions}
\begin{pmatrix}
y(a) \\
y'(a)
\end{pmatrix} =\begin{pmatrix}
 1 & d \\
d (q(\gamma) - \lambda ) & 1 - \lambda d^2
\end{pmatrix} \begin{pmatrix}
y(\gamma) \\
y'(\gamma)
\end{pmatrix}.
\end{equation}
From~\eqref{derivative} it also follows that $y \in C_\Delta^2(T)$ is equivalent to the condition
$y \in C^2[0, \gamma] \cap C^2[a, b],$
where $C^2(B)$ denotes the class of functions having classical derivatives of the second order that are continuous on $B.$

Introduce the solutions $S(x, \lambda)$ and $C(x, \lambda)$ of the first equation in \eqref{system} under the initial conditions
\begin{equation*}S(\gamma, \lambda) =0, \; S'(\gamma, \lambda) = 1; \quad  C(\gamma, \lambda) =1, \; C'(\gamma, \lambda) = 0.\end{equation*}
For these functions, the following formulae are known (see~\cite{BondButVas}):
\begin{equation} \label{SC}
S(x, \lambda) = \frac{\sin \rho(x-\gamma)}{\rho}, \; C(x, \lambda) = \cos \rho(x - \gamma) + \int_\gamma^x \frac{\sin \rho (x-t)}{\rho} q(t) \, dt, \quad x \in [0, \gamma];
\end{equation}
here and below $\lambda=\rho^2.$

Any solution of system~\eqref{system}--\eqref{jump conditions} on $[0, \gamma]$ can be represented in the form
\begin{equation} \label{rep1}
y(x) =  AS(x, \lambda)+BC(x, \lambda).
\end{equation}
Taking into account~\eqref{jump conditions}, for $x\in[a, b],$ we have
\begin{multline} \label{rep2}
y(x) = \big(y(\gamma) + d y'(\gamma)\big) \cos \rho(x-a) + \\
+\big( (1-d^2 \lambda) y'(\gamma) -d \lambda y(\gamma) + d y(\gamma) q(\gamma)\big) \frac{\sin \rho (x-a)}{\rho} +
B \int_{a}^x \frac{\sin \rho(x-t)}{\rho} q(t)\, dt.\end{multline}
Substituting representations~\eqref{rep1} and~\eqref{rep2} into boundary conditions~\eqref{Dirichlet}, we obtain the following system of linear equations with respect to $A$ and $B:$
\begin{equation*} \left\{
\begin{array}{c} \\
A S(0, \lambda) + B C(0, \lambda) =0, \\[3mm]
 \displaystyle
A \bigg( d \cos \rho l +  \big[1-d^2 \lambda\big]\frac{\sin \rho l}{\rho} \bigg) +
B \bigg(\cos \rho l + \big[dq(\gamma)-d \lambda\big] \frac{\sin \rho l}{\rho}
+ \int_{a}^{b} \frac{\sin \rho(b - t)}{\rho} q(t) \, dt \bigg)  =0.
\end{array}
\right.
\label{sys1}
\end{equation*}
A number $\lambda$ is an eigenvalue of boundary value problem~\eqref{*}--\eqref{Dirichlet} if and only if there exists a non-zero solution of this system. The determinant of the system $\Delta(\lambda)$ is called the characteristic function of  boundary value problem~\eqref{*}--\eqref{Dirichlet}, and it is an entire function of order $1/2.$
The spectrum of boundary value problem~\eqref{*}--\eqref{Dirichlet} is the sequence of the characteristic function zeros $\{\lambda_n\}_{n \ge 0},$ taking into account the multiplicity.

Using formulae~\eqref{SC}, we get
\begin{equation}\begin{split}
\Delta(\lambda) = -c_1(\lambda) \frac{\sin \rho \gamma}{\rho} - c_2(\lambda) \cos \rho \gamma - \frac{\sin \rho \gamma}{\rho} \int_{a}^{b} \frac{\sin \rho (b-t)}{\rho} q(t)\, dt
\\
-c_2(\lambda) \int_0^{\gamma} \frac{\sin \rho t}{\rho} q(t) \, dt -d q(\gamma)\frac{\sin^2 \rho \gamma}{\rho^2},\end{split}
\label{Delta}\end{equation}
where
\begin{equation*}c_1(\lambda) := \cos \rho l - d \rho \sin \rho l, \quad c_2(\lambda) := d \cos \rho l + \frac{1-d^2 \lambda}{\rho} \sin \rho l.
\end{equation*}

Further, we need the following lemma.
\begin{lemma} \label{riesz}
Positive zeros of the function $c_2(z^2)$ can be selected to a sequence $\{ z_n\}_{n \ge 1}$ such that
the systems $\{ \sin z_n t\}_{n \ge 1}$ and $\{ 1\} \cup \{ \cos z_n t\}_{n \ge 1}$ are Riesz basises in $L_2(0, l).$
\end{lemma}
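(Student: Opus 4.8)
The plan is to compare each system with a genuine orthonormal basis of $L_2(0,l)$ — namely $\{\sqrt{2/l}\,\sin(\pi n t/l)\}_{n\ge1}$ for the sines and $\{1/\sqrt l\}\cup\{\sqrt{2/l}\,\cos(\pi n t/l)\}_{n\ge1}$ for the cosines — and then to invoke Bari's theorem in the form: if $\{e_n\}$ is an orthonormal basis and $\sum_n\|\phi_n-e_n\|^2<\infty$, then $\{\phi_n\}$ is a Riesz basis precisely when it is complete. It is convenient to organize everything around the odd entire function $g(z):=z\,c_2(z^2)=-d^2z^2\sin zl+dz\cos zl+\sin zl$, whose zero set is $\{0\}\cup\{\pm z_n\}$, since $c_2(z^2)$ is even in $z$ with $c_2(0)=d+l\ne0$; the even/odd splitting of $L_2(-l,l)$ is what ties the sine and cosine statements together, the constant $1$ corresponding to the frequency $z=0$. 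Throughout I assume $d>0$ (for $d=0$ one has $c_2(z^2)=z^{-1}\sin zl$, whose positive zeros are exactly $\pi n/l$, so the claim is the classical Fourier case).

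First I would locate the zeros. For real $x$ the equation $c_2(x^2)=0$ reads $\tan xl=dx/(d^2x^2-1)$ wherever $\cos xl\ne0$, and since the right-hand side is $O(1/x)$, exactly one zero $z_n$ lies near $\pi n/l$ for all large $n$; a Rouch\'e comparison of $g$ with its dominant part $-d^2z^2\sin zl$ on the circles $|z|=\pi(n+\tfrac12)/l$ confirms both the count and the enumeration, giving $z_n=\pi n/l+O(1/n)$ (the finitely many small zeros are enumerated arbitrarily, which is the meaning of ``can be selected''). This asymptotics is all that is needed for quadratic closeness: from $|\sin z_n t-\sin(\pi n t/l)|\le l\,|z_n-\pi n/l|$ on $(0,l)$ we obtain $\sum_n\|\sin z_n t-\sin(\pi n t/l)\|_{L_2(0,l)}^2<\infty$, and likewise for the cosines, so both perturbed systems are $L_2$-close to the corresponding orthonormal bases.

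The main obstacle is completeness, which I would settle by a Paley--Wiener argument built on $g$. Suppose $f\in L_2(0,l)$ is orthogonal to every $\sin z_n t$; then $F(z):=\int_0^l f(t)\sin zt\,dt$ is an odd entire function of exponential type $\le l$ with $F|_{\mathbb R}\in L_2$, and $F$ vanishes at every $z_n$ as well as at $0$, i.e.\ at every zero of $g$. The key estimate is the sine-type lower bound $|g(z)|\ge c\,|z|^2e^{l|\Im z|}$, valid outside fixed discs centred at the zeros of $g$; it follows from the dominance of $-d^2z^2\sin zl$ together with $|\sin zl|\ge c'e^{l|\Im z|}$ away from the points $\pi n/l$, the perturbation $z_n-\pi n/l=O(1/n)$ ensuring that ``away from the zeros of $g$'' and ``away from the zeros of $\sin zl$'' describe the same regions for large $|z|$. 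Since $|F(z)|\le\sqrt l\,\|f\|_2\,e^{l|\Im z|}$, the quotient $F/g$ is entire and satisfies $|F/g|\le C/|z|^2$ on the expanding contours $|z|=\pi(n+\tfrac12)/l$; by the maximum principle it is bounded on $\mathbb C$ with limit $0$, so Liouville's theorem forces $F\equiv0$ and hence $f=0$. The cosine case is identical: if $f\perp1$ and $f\perp\cos z_n t$ for all $n$, then $G(z):=\int_0^l f(t)\cos zt\,dt$ is even, vanishes at $0$ (because $\langle f,1\rangle=0$) and at all $\pm z_n$, so $G/g$ is again entire, bounded, and vanishing at infinity, whence $G\equiv0$.

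Finally, quadratic closeness together with completeness yields, by Bari's theorem, that each system is a Riesz basis of $L_2(0,l)$; if one prefers to check minimality directly, it comes from dividing a single factor $z^2-z_m^2$ out of $c_2(z^2)$ to produce the biorthogonal family. I expect the delicate point to be exactly the uniform lower bound $|g(z)|\ge c|z|^2e^{l|\Im z|}$ off the zeros, since it is what converts the orthogonality relations into the divisibility $F/g$, $G/g$ that drives the Liouville step; the finitely many small zeros, where the asymptotic dominance of $-d^2z^2\sin zl$ is not yet effective, are absorbed by the freedom in selecting and enumerating the sequence $\{z_n\}$.
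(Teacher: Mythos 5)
Your overall strategy (Rouch\'e localization, quadratic closeness to the Fourier system, completeness via a Liouville argument) is the same as the paper's, and your ``Bari'' statement --- quadratic closeness to an orthonormal basis plus completeness implies Riesz basis --- is legitimate; it is essentially Proposition~1.8.5 of Freiling--Yurko, which the paper cites. But there is a genuine gap in your zero count, and it propagates into the completeness step. On the circles $|z|=\pi(N+\frac12)/l,$ Rouch\'e against your dominant part $h(z)=-d^2z^2\sin zl$ gives $g$ exactly as many zeros inside as $h$ has, namely $2N+3$ (the origin counts \emph{three} times for $h$). Since your $g(z)=zc_2(z^2)$ has only a \emph{simple} zero at the origin ($g'(0)=d+l\neq0$), the function $c_2(z^2)$ has $N+1$ zero-pairs $\pm\zeta$ in the disc --- one more than the $N$ intervals around $\pi k/l,$ $k=1,\dots,N,$ account for. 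So besides the zeros $z_n\approx\pi n/l$ there is an extra pair $\pm z_0$ (real or purely imaginary; this is the $z_0$ appearing in the paper), and ``can be selected'' in the lemma means precisely that this pair must be \emph{excluded} from the sequence, not that finitely many small zeros are enumerated arbitrarily. Including it is not an option: the system $\{\sin z_nt\}_{n\ge1}$ without $z_0$ is already complete (that is what the completeness argument proves), so adjoining $\sin z_0t$ would give a complete but non-minimal system, which cannot be a Riesz basis.

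With $\pm z_0$ excluded, your divisibility step collapses: orthogonality gives $F(z_n)=0$ only for $n\ge1$ (plus $F(0)=0$ by oddness), so $F$ need not vanish at $\pm z_0,$ and $F/g$ is \emph{not} entire --- Liouville cannot be applied to it. This is exactly what the compensating factor in the paper's function $F(\lambda)=(\lambda-z_0^2)\int_0^l f(t)\sin\rho t\,dt\,/\,\bigl(\rho\,c_2(\lambda)\bigr)$ is for. The price of that factor is that the quotient is then only \emph{bounded} on the expanding circles rather than $O(1/|z|^2),$ so Liouville yields $F\equiv C,$ not $F\equiv0,$ and one further step is needed: evaluating along $\rho=\pi(n+\frac12)/l,$ where by Riemann--Lebesgue the sine transform is $o(1)$ while $\rho\,c_2(\lambda)/(\lambda-z_0^2)=(-1)^{n+1}d^2(1+o(1)),$ which forces $C=0.$ Note the gap is not a removable edge case: even when $z_0$ is purely imaginary (so that the positive real zeros are exactly your $z_n$), $g$ still vanishes at $\pm z_0$ and $F/g$ still fails to be entire. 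Your cosine argument has the same defect.
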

\begin{proof}
Consider the equation
\begin{equation*} g(z) := \frac{d z}{d^2 z^2 - 1} - \tan z l = 0.
\label{eq}\end{equation*}
If $z$ is a non-zero root of this equation, then the number $z$ makes $c_2(z^2)$ vanish. Note that the function $g(z)$ is continuous and monotone on any interval not containing points $\pm 1/d,$ $\pm {\pi(n+1/2)/l},$ $n \in \mathbb Z.$
Using the intermediate value theorem, one can show that each interval
\begin{equation*}I_n = \Big(\frac{\pi n}{l} - \frac{\pi}{2l}, \frac{\pi n}{l}+\frac{\pi}{2l}\Big), \quad n \in \mathbb N, \end{equation*}
contains at least one zero of $g(z).$ Let us choose as $z_n$ any zero from the interval $I_n,$ $n \in \mathbb N,$ and then all $z_n > 0$ are distinct.

Using the standard technique~\cite{[5]} involving application of Rouche's theorem, one can prove that the sequence of $c_2(z^2)$ zeros has the form $\{ -z_n \}_{n \ge 0} \cup \{ z_n\}_{n \ge 0}$ together with the asymptotic formulae
\begin{equation} \label{z_n}
z_n = \frac{\pi n}{l} + \frac{1}{d \pi n} + O\Big(\frac{1}{n^3}\Big), \quad  n \in \mathbb N.
\end{equation}
Let us prove that $\{ \sin z_n t\}_{n \ge 1}$ is a Riesz basis; for $\{ 1\} \cup \{ \cos z_n t\}_{n \ge 1}$ the proof is analogous.
According to Proposition~1.8.5 from~\cite{[5]}, it is sufficient to prove the completeness of the system $\{ \sin z_n t\}_{n \ge 1}.$

Let $f \in L_2(0, l)$ be such that $\int_0^{l} f(t) \sin z_n t \, dt = 0,$
then the function
\begin{equation*}
F(\lambda) = \frac{(\lambda - z^2_0) \int_0^{l} f(t) \sin \rho t \, dt}{\rho c_2(\lambda)},
\end{equation*}
is entire in $\lambda.$
Using the standard estimate (see~\cite {[5]})
\begin{equation*}|\sin \rho l| \ge M^{-1} e^{|\mathrm{Im}\, \rho|l}, \quad  |\rho| = \frac{\pi (n+\frac12)}{l}, \; n \in \mathbb{N}, \; n \ge N^*,\end{equation*}
we obtain that $|F(\lambda)| \le M$ on the circles $|\lambda| = (\pi (n+1/2)/l)^2$ for a sufficiently large $n\in \mathbb N.$
Then, the maximum modulus principle and Liouville's theorem yield that
$F(\lambda) \equiv C.$ By the Riemann--Lebesgue lemma, we also have
%and asymptotics
\begin{equation*}\int_0^{l} f(t) \sin \rho t \, dt = o(1), \quad  \frac{\rho c_2(\lambda)}{\lambda - z^2_0} = (-1)^{n+1} d^2(1+o(1)) \end{equation*}
for $\rho = \pi (n+1/2)/l.$
Hence, it follows that the only possible value is $C = 0.$ Now $f \equiv 0,$ and the completeness of the system $\{ \sin z_n t\}_{n \ge 1}$ is proved.
\end{proof}

%%\section{Обратная задача.}
We consider the following inverse problem.
\begin{ip} \label{inverse problem}
Given the spectrum $\{\lambda_n \}_{n \ge 0},$ recover the potential $q.$
\end{ip}
Now we prove a proposition that allows us to reduce the inverse problem to recovering $q$ from the characteristic function.
\begin{statement} \label{s1}
The characteristic function $\Delta(\lambda)$ is reconstructed uniquely by the spectrum $\{ \lambda_n \}_{n \ge 0}.$
\end{statement}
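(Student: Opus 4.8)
The plan is to exploit the only structural facts we are handed: that $\Delta(\lambda)$ is entire of order $1/2$ and that its zero set is exactly the prescribed spectrum $\{\lambda_n\}_{n\ge 0}$ (with multiplicity). Since the order is $1/2<1$, the genus is $0$ and Hadamard's factorization theorem gives a representation of $\Delta$ through its zeros up to a single multiplicative constant, namely
\begin{equation*}
\Delta(\lambda)=C\,\lambda^{m_0}\prod_{\lambda_n\ne 0}\Big(1-\frac{\lambda}{\lambda_n}\Big),
\end{equation*}
where $m_0$ is the multiplicity of $0$ in the spectrum. The canonical product on the right is completely determined by the given $\{\lambda_n\}$, and $m_0$ is read off from the spectral data as well. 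Thus the whole problem reduces to showing that the constant $C$ is also determined by the spectrum; equivalently, if $\tilde q$ produces a characteristic function $\tilde\Delta$ with the same zeros, I must show $\Delta\equiv\tilde\Delta$.

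To this end I would form $G(\lambda):=\Delta(\lambda)/\tilde\Delta(\lambda)$. Because $\Delta$ and $\tilde\Delta$ share the same zeros with the same multiplicities, $G$ is entire and has no zeros. I would then establish two-sided estimates $c\,e^{h(\theta)R_n}\le|\Delta|,|\tilde\Delta|\le C\,e^{h(\theta)R_n}$ on a sequence of expanding circles $|\rho|=R_n$ chosen to stay away from the zeros; this is exactly the Rouch\'e-type ``standard estimate'' already invoked in Lemma~\ref{riesz} and in the derivation of~\eqref{z_n}. On these circles the two functions have the same growth, so $G$ is bounded there, hence bounded on all of $\mathbb C$ by the maximum modulus principle, and therefore constant by Liouville's theorem.

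It then remains to identify the value of the constant as $1$. For this I would evaluate $G$ along the positive real $\rho$-axis: inspecting~\eqref{Delta}, the leading term $d^2\rho\sin\rho l\cos\rho\gamma$ does not involve $q$, while every $q$-dependent contribution is $o(\rho)$ there (by the Riemann--Lebesgue lemma for the integral terms and by the explicit $\rho^{-2}$ factors for the remaining ones). Choosing $\rho\to+\infty$ so that $\sin\rho l\cos\rho\gamma$ stays bounded away from zero yields $G\to 1$, whence $G\equiv 1$ and $\Delta\equiv\tilde\Delta$. The main obstacle is the growth analysis feeding the Liouville step: the term $-d\,q(\gamma)\sin^2\rho\gamma/\rho^2$ has exponential type $2\gamma$, which off the real axis can exceed the type $l+\gamma$ of the remaining terms when $\gamma>l$, so one must verify carefully that the two-sided bounds (and hence boundedness of the ratio) still hold on the chosen circles, and that the constant is legitimately extracted on the real axis, where precisely this delicate term becomes negligible.
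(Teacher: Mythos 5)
Your proposal is correct in substance, but it reaches the conclusion by a genuinely different route than the paper, and one intermediate step needs to be repaired (easily). The paper also starts from Hadamard's factorization $\Delta(\lambda)=CG(\lambda)$, but it determines $C$ by asymptotics along the \emph{negative} real axis, where all exponential terms are large; this forces the case distinction visible in~\eqref{Delta asymp}: the type of $G$ is either $\gamma+l$ (when $\gamma\le l$ or $q(\gamma)=0$) or $2\gamma$ (when $\gamma>l$ and $q(\gamma)\ne0$). In the second case the dominant term $-dq(\gamma)s^2(\lambda)$ involves the unknown $q(\gamma),$ so the paper extracts first $C_1=\lim_{\lambda\to-\infty}G(\lambda)/s^2(\lambda)=-dq(\gamma)/C,$ and only then, after subtracting $C_1s^2(\lambda),$ recovers $C$ from the exposed known term $d^2\rho\sin\rho l\cos\rho\gamma/C.$ You instead identify the normalization along the \emph{positive} real axis, where the troublesome term $-dq(\gamma)\sin^2\rho\gamma/\rho^2$ is $O(\rho^{-2})$ and all $q$-dependent contributions of~\eqref{Delta} are $o(1)$ (Riemann--Lebesgue for the integral terms), so that $\Delta(\rho^2)=d^2\rho\sin\rho l\cos\rho\gamma+o(\rho)$ uniformly in $\rho>0.$ Picking $\rho_k\to+\infty$ with $|\sin\rho_k l\cos\rho_k\gamma|$ bounded below (such a sequence exists, since $\sin\rho l\cos\rho\gamma=\frac12\left[\sin\rho(l+\gamma)+\sin\rho(l-\gamma)\right]$ is a nonzero almost periodic function of $\rho$) gives $\Delta/\tilde\Delta\to1$ with no case analysis at all. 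This is a genuine simplification, and it stays constructive: it yields $C=\lim_k d^2\rho_k\sin\rho_k l\cos\rho_k\gamma\,/\,G(\rho_k^2),$ so step~1 of Algorithm~\ref{algorithm} is still supported. What the paper's longer route buys, incidentally, is the value $q(\gamma)$ in the type-$2\gamma$ case, read off from $C_1.$

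The step you should remove is the Liouville argument on expanding circles. Having invoked Hadamard for $\Delta,$ invoke it for $\tilde\Delta$ as well: both are entire of order $1/2,$ hence of genus zero, so both are constants times the \emph{same} canonical product built from the common zeros, and $G=\Delta/\tilde\Delta$ is constant with no further work. Your proposed alternative --- two-sided bounds $c\,e^{h(\theta)R_n}\le|\Delta|,|\tilde\Delta|\le C\,e^{h(\theta)R_n}$ with a common indicator $h(\theta)$ --- is precisely where your self-identified ``main obstacle'' lives: a priori one would have to exclude the situation $\gamma>l,$ $q(\gamma)\ne0=\tilde q(\gamma),$ in which $\Delta$ and $\tilde\Delta$ have different types ($2\gamma$ versus $\gamma+l$) and no common two-sided bound can hold; the clean way to exclude it is exactly the Hadamard comparison, which then renders the circle estimates superfluous. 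With that replacement, your argument is complete and correct.
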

\begin{proof} Denote $s(\lambda) = \rho^{-1}\sin \rho \gamma.$
From representation~\eqref{Delta}, we easily obtain the asymptotics
\begin{equation} \label{Delta asymp}
\Delta(\lambda) = \left\{
\begin{array}{cc}
d^2 \rho \sin \rho l \cos \rho \gamma + O(e^{|\mathrm{Im}\, \rho|(\gamma+l)}), & \gamma \le l \text{ or } q(\gamma) = 0, \\[2mm]
-d q(\gamma) s^2(\lambda) + O(\rho e^{|\mathrm{Im}\, \rho|(\gamma+l)}), & \gamma > l \text{ and } q(\gamma) \ne 0,
\end{array} \quad \lambda \to \infty.
\right.
\end{equation}
Let $k_0$ be the multiplicity of $0$ in the spectrum. Without loss of generality, we assume that $0$ occurs only among the first $k_0$ spectrum members, i.e. $\lambda_0 = \ldots = \lambda_{k_0 - 2}=\lambda_{k_0-1}=0.$
By Hadamard's theorem, the characteristic function is determined up to the constant $C \ne 0:$
\begin{equation}
\Delta(\lambda) = C G(\lambda), \quad G(\lambda) := \lambda^{k_0} \prod^\infty_{n=k_0} \left(1-\frac{\lambda}{\lambda_n}\right).
\label{adamar}\end{equation}
According to~\eqref{Delta asymp}, the type of the constructed infinite product $G (\lambda)$ can be equal to either $ \gamma + l, $ or $ 2 \gamma.$ If the type is equal to $\gamma+l,$ then
the constant $C$ in~\eqref{adamar} is determined via the first asymptotic formula in~\eqref{Delta asymp}.
Now, let the type be $2\gamma,$ which is possible only for $\gamma > l$ and $q(\gamma) \ne 0.$ We define
\begin{equation*}
C_1 = \lim_{\lambda \to -\infty} \frac{G(\lambda)}{s^2(\lambda)}.
\end{equation*}
It is clear from~\eqref{Delta} that $C_1 = -d q(\gamma)/C.$
Also~\eqref{Delta} and~\eqref{adamar} yield that
\begin{equation*}G(\lambda)-C_1s^2(\lambda) = \frac{d^2 \rho \sin \rho d_2 \cos \rho \gamma (1+o(1))}{C}, \quad \lambda \to -\infty,
\end{equation*}
and
\begin{equation*}C = \lim_{\lambda \to -\infty} \frac{d^2 \rho \sin \rho l \cos \rho \gamma }{G(\lambda)-C_1s^2(\lambda)}.
\end{equation*}
Thus, $C$ is uniquely determined, which proves the assertion of the lemma.
\end{proof}

Let us prove a uniqueness theorem for solution of Inverse problem~\ref{inverse problem}. The proof is based on calculating the coefficients in the expansions of the function $q$ with respect to the basises $\{ \sin \pi n t/\gamma \}_{n \ge 1}$ and
$\{ \sin z_n t \}_{n \ge 1}$ on the segments $[0, \gamma]$ and $[a, b].$ These coefficients are calculated by substituting the values $\lambda = (\pi n/\gamma)^2$ and $\lambda=z_n^2$ into representation~\eqref{Delta} provided $c_2\big((\pi n/\gamma)^2\big)\ne 0,$ $n \in {\mathbb N}.$ This approach is similar to the one used in~\cite{wang-froz} for the uniqueness theorem proof.

\begin{thm} Denote by $\{ \tilde \lambda_n \}_{n \ge 0}$ the spectrum of boundary value problem~\eqref{*}--\eqref{Dirichlet} with some potential $\tilde q \in C(T).$
If the functions $c_2(\lambda)$ and $s(\lambda) := \rho^{-1}\sin{\rho \gamma}$  do not have common zeros, then the equality $\{ \lambda_n \}_{n \ge 0}=\{ \tilde \lambda_n \}_{n \ge 0}$ yields $q = \tilde q.$
\label{uniqueness}
\end{thm}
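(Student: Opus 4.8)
The plan is to reduce everything to the characteristic function and then read off the two ``halves'' of the potential from two complementary families of sample points. First, by Proposition~\ref{s1} the coincidence of the spectra $\{\lambda_n\}=\{\tilde\lambda_n\}$ forces $\Delta(\lambda)\equiv\tilde\Delta(\lambda)$. In representation~\eqref{Delta} the factors $c_1(\lambda)$ and $c_2(\lambda)$ depend only on $d$ and $l$, not on $q$; hence subtracting the two identical characteristic functions annihilates the $q$-independent part and, writing $Q:=q-\tilde q$, leaves the single identity
\begin{equation}
s(\lambda)\,J_1(\lambda)+c_2(\lambda)\,J_2(\lambda)+d\,Q(\gamma)\,s^2(\lambda)\equiv 0,\label{plan-id}
\end{equation}
where $J_1(\lambda)=\int_a^b\frac{\sin\rho(b-t)}{\rho}Q(t)\,dt$ and $J_2(\lambda)=\int_0^\gamma\frac{\sin\rho t}{\rho}Q(t)\,dt$. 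The entire proof consists in extracting $Q\equiv 0$ from~\eqref{plan-id}.

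The key idea is that the two special families of points decouple the two integrals. First I would substitute $\lambda=(\pi n/\gamma)^2$, $n\ge 1$, i.e.\ the zeros of $s(\lambda)$. There $s=0$, so the first and third terms of~\eqref{plan-id} vanish and only $c_2\big((\pi n/\gamma)^2\big)J_2\big((\pi n/\gamma)^2\big)=0$ survives. The hypothesis that $c_2$ and $s$ have no common zeros is used exactly here: since $(\pi n/\gamma)^2$ is a zero of $s$, it is not a zero of $c_2$, so the factor $c_2\big((\pi n/\gamma)^2\big)$ is nonzero and therefore $\int_0^\gamma Q(t)\sin(\pi n t/\gamma)\,dt=0$ for every $n$. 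Completeness of $\{\sin(\pi n t/\gamma)\}_{n\ge 1}$ in $L_2(0,\gamma)$ then gives $Q\equiv 0$ on $[0,\gamma]$, and in particular $Q(\gamma)=0$.

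With $Q(\gamma)=0$ in hand, I would then substitute $\lambda=z_n^2$, where $\{z_n\}$ are the zeros of $c_2$ furnished by Lemma~\ref{riesz}. At these points $c_2(z_n^2)=0$, so the middle term of~\eqref{plan-id} drops out, while $d\,Q(\gamma)\,s^2(z_n^2)=0$ because $Q(\gamma)=0$; what remains is $s(z_n^2)J_1(z_n^2)=0$. Invoking the no-common-zeros hypothesis a second time --- now $z_n^2$ is a zero of $c_2$, hence not a zero of $s$ --- gives $s(z_n^2)\ne 0$, so $J_1(z_n^2)=0$, i.e.\ (after the change of variable $t\mapsto b-t$) $\int_0^l Q(b-t)\sin z_n t\,dt=0$ for all $n$. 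By Lemma~\ref{riesz} the system $\{\sin z_n t\}_{n\ge 1}$ is a Riesz basis, hence complete, in $L_2(0,l)$, so $Q\equiv 0$ on $[a,b]$ as well; together with the previous step this yields $q=\tilde q$.

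The main obstacle --- and the reason the special form~\eqref{Delta} matters --- is that in~\eqref{plan-id} the two unknown integrals $J_1$ (over $[a,b]$) and $J_2$ (over $[0,\gamma]$) are genuinely entangled, each multiplied by a transcendental coefficient, so no single sampling set recovers $Q$ directly. The decisive observation is that the zero sets of the two coefficients $s$ and $c_2$ play complementary roles, and the hypothesis of the theorem is precisely the statement that these zero sets are disjoint; this is what guarantees simultaneously that the isolating coefficient is nonzero at each sample point and that the cross terms vanish. One must also respect the order of the two steps: the extra term $d\,Q(\gamma)\,s^2(\lambda)$ is only known to vanish after $Q(\gamma)=0$ has been established on the first segment, so the analysis on $[0,\gamma]$ must precede that on $[a,b]$.
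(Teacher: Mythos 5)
Your proof is correct and takes essentially the same route as the paper: both reduce the problem to $\Delta\equiv\tilde\Delta$ via Proposition~\ref{s1}, sample at the zeros $(\pi n/\gamma)^2$ of $s(\lambda)$ and then at the zeros $z_n^2$ of $c_2(\lambda)$, use the no-common-zeros hypothesis to ensure the isolating coefficient is nonzero at each family of sample points, and conclude by completeness of $\{\sin (\pi n t/\gamma)\}_{n\ge 1}$ in $L_2(0,\gamma)$ and of $\{\sin z_n t\}_{n\ge 1}$ in $L_2(0,l)$ (Lemma~\ref{riesz}). The only difference is presentational: you work with a single identity for the difference $q-\tilde q$, whereas the paper computes the coefficients $\varkappa_n$ and $\xi_n$ explicitly for each potential and notes they coincide, with the same implicit ordering (the term involving $q(\gamma)$ is handled only after equality on $[0,\gamma]$ is established).
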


\begin{proof}
According to Proposition~\ref{s1}, the function $\Delta(\lambda)$ is uniquely determined by specifying its zeros $\{ \lambda_n \}_{n \ge 0}.$ We denote by $\tilde \Delta(\lambda)$ the characteristic function of boundary value problem~\eqref{*}--\eqref{Dirichlet} with the potential $\tilde q.$ So if
$\{ \lambda_n \}_{n \ge 0}=\{ \tilde \lambda_n \}_{n \ge 0},$ then $\Delta(\lambda)=\tilde\Delta(\lambda).$
% %%%%%
Substituting the values $\lambda=(\pi n/\gamma)^2$ into representation~\eqref{Delta}, we obtain the relations
\begin{equation*}
\Delta\bigg(\Big(\frac{\pi n}{\gamma}\Big)^2\bigg) = k_n \bigg((-1)^{n+1} - \frac{\gamma}{\pi n} \varkappa_n\bigg), \quad k_n := c_2\bigg(\Big(\frac{\pi n}{\gamma}\Big)^2\bigg), \; \varkappa_n := \int_0^{\gamma} \sin \frac{\pi n}{\gamma}t \, q(t) \, dt,
\end{equation*}
whence we arrive at the formula
\begin{equation} \label{varkappa}
\varkappa_n = -\frac{\pi n}{k_n \gamma} \left( \Delta\bigg(\Big(\frac{\pi n}{\gamma}\Big)^2\bigg) + (-1)^n k_n \right), \quad n \ge 1.
\end{equation}
Analogously, we get the equality
\begin{equation*}
\int_0^{\gamma} \sin \frac{\pi n}{\gamma}t \, \tilde q(t) \, dt = \varkappa_n, \quad n \ge 1,
\end{equation*}
with the same $\varkappa_n.$ Now
\begin{equation*}
\int_0^{\gamma} \sin \frac{\pi n}{\gamma}t \, (q(t) - \tilde q(t)) \, dt = 0, \quad n \ge 1,
\end{equation*}
and the completeness of the system $\{ \sin{\pi n t/\gamma}\}_{n \ge 1}$ in  $L_2(0, \gamma)$ yields that $q = \tilde q$ on $[0, \gamma].$

Further, we substitute $\lambda = z_n^2$ into the characteristic function. The completeness of the system $\{ \sin z_n t\}_{n \ge 1}$ in $L_2(0, l)$ is proved in Lemma~\ref{riesz}. Proceeding analogously to the first part of the proof, for the coefficients $\xi_n := \int_0^{l} \sin z_nt \, q(b-t)\, dt,$ we obtain the formulae
\begin{equation} \label{xi}
\xi_n =  -\frac{z_n^2}{\sin z_n \gamma}\left(\Delta\big(z^2_n\big) + c_1\big(z^2_n\big) \frac{\sin z_n \gamma}{z_n} +d q(\gamma)\frac{\sin^2 z_n \gamma}{z_n^2}\right), \quad n \ge 1,
\end{equation}
and the equality $q = \tilde q$ on $[a, b].$
\end{proof}
Based on formulae~\eqref{varkappa} and~\eqref{xi}, we obtain the following algorithm of recovery.

\begin{alg} \label{algorithm}
Let the functions $c_2(\lambda)$ and $s(\lambda)$ not have common zeros. To recover the potential from $\{ \lambda_n \}_{n \ge 0},$ one should:

1. Construct $\Delta(\lambda),$ see Proposition~\ref{s1}.

2. Compute the coefficients $\varkappa_n$ via formula~\eqref{varkappa} and find \begin{equation*}
q(t) = \frac{2}{\gamma} \sum_{n=1}^\infty \varkappa_n \sin \frac{\pi n}{\gamma}t, \quad t \in (0, \gamma).\end{equation*}
%По непрерывности найти $q(\gamma).$

3. Compute the coefficients $\xi_n$ via formula~\eqref{xi} and construct
\begin{equation*}q(b - t) = \sum_{n=1}^\infty \xi_n \chi_n(t), \quad t \in (0, l),\end{equation*}
where the system $\{ \chi_n(t)\}_{n \ge 1}$ is biorthogonal to the basis $\{\sin z_n t\}_{n \ge 1}$ in $L_2(0, l).$
\end{alg}

\begin{statement} It can be ensured that $c_2(\lambda)$ and $s(\lambda)$ do not have common zeros by imposing one of the three restrictions on $d,$ $\gamma,$ and $l:$

1) for some $k \in \mathbb N,$ $l = k \gamma$ holds;

2) the numbers $\pi l/\gamma$ and $\pi d/\gamma$ are rational;

3)  the numbers $l/\gamma,$ $\pi d/\gamma$ are rational, and $\cos l/d \ne 0.$
\end{statement}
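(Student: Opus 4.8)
The plan is to reduce the statement to a purely arithmetic condition on a single family of test points and then dispatch the three restrictions separately. Since $s(0)=\gamma\neq 0$, the zeros of $s(\lambda)=\rho^{-1}\sin\rho\gamma$ are exactly the points $\lambda=\rho_m^2$ with $\rho_m=\pi m/\gamma$, $m\ge 1$. Hence $c_2$ and $s$ share a zero if and only if $c_2(\rho_m^2)=0$ for some $m\ge 1$. Multiplying by $\rho_m>0$, I would work with $P(\rho):=\rho\,c_2(\rho^2)=d\rho\cos\rho l+(1-d^2\rho^2)\sin\rho l$ and, writing $\theta:=\rho_m l$ and $\delta:=d\rho_m>0$, record that a common zero occurs precisely when
\[
\delta\cos\theta+(1-\delta^2)\sin\theta=0 \quad\text{for some } m\ge 1 .
\]
Here $d>0$ guarantees $\delta>0$, which will let me discard negative roots of the quadratics in $\delta$ that appear below.

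For restriction~1, $l=k\gamma$ gives $\theta=\pi mk\in\pi\mathbb Z$, so $\sin\theta=0$ and $\cos\theta=\pm1$, whence the left-hand side equals $\pm\delta\neq 0$ and no common zero exists. In the remaining cases I treat the points with $\sin\theta=0$ exactly as here (the expression reduces to $\pm\delta\neq0$), so I may assume $\sin\theta\neq0$ and rewrite the condition as $\cot\theta=\delta-1/\delta$. For restriction~2 the hypotheses $\pi l/\gamma,\pi d/\gamma\in\mathbb Q$ make $\theta=m(\pi l/\gamma)$ and $\delta=m(\pi d/\gamma)$ rational, with $\theta\neq0$ (so indeed $\sin\theta\neq0$, as $\sin$ vanishes only at the irrational points of $\pi\mathbb Z\setminus\{0\}$). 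Then $\delta-1/\delta$ is rational, while by the Lindemann--Weierstrass theorem $\cot\theta$ is transcendental for every nonzero rational $\theta$; this contradiction rules out a common zero.

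For restriction~3 the hypotheses $l/\gamma\in\mathbb Q$, $\pi d/\gamma\in\mathbb Q$ make $\theta=m\pi(l/\gamma)$ a rational multiple of $\pi$ and $\delta=m(\pi d/\gamma)$ rational. If $\sin\theta\neq0$, then $\cot\theta$ is algebraic and, being equal to the rational number $\delta-1/\delta$, must be rational; by Niven's theorem the only rational values of $\cot$ at a rational multiple of $\pi$ are $0,\pm1$. The values $\pm1$ force $\delta-1/\delta=\pm1$, i.e. $\delta=(\pm1+\sqrt5)/2$, which is irrational and contradicts $\delta\in\mathbb Q$. The value $0$ forces $\delta=1$ (the positive root), so $\rho_m=1/d$; then $\theta=l/d$, and $\cot\theta=0$ means $\cos(l/d)=0$. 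Since $c_2(1/d^2)=d\cos(l/d)$, this last possibility is exactly the one excluded by the hypothesis $\cos(l/d)\neq0$, so no common zero exists in this case either.

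The routine parts are the elementary rewriting and the $\sin\theta=0$ subcases; the crux is the number-theoretic input. The main obstacle is restriction~3: one must notice that, after Niven's theorem reduces $\cot\theta$ to $\{0,\pm1\}$, the only surviving arithmetic possibility is $\delta=1$, i.e. the single test point $\rho_m=1/d$, and that the hypothesis $\cos(l/d)\neq0$ is imposed precisely to kill this point via the identity $c_2(1/d^2)=d\cos(l/d)$. Cleanly packaging the transcendence argument (restriction~2) against the algebraicity-plus-Niven dichotomy (restriction~3), and confirming that no other $m$ can satisfy $\cot\theta=\delta-1/\delta$, is the delicate step.
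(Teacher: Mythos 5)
Your proposal is correct, and its skeleton is the same as the paper's: restrict to the candidate points $\rho_m=\pi m/\gamma$ (the zeros of $s$), turn $c_2(\rho_m^2)=0$ into a trigonometric--arithmetic identity, dispatch restriction 1) by noting the identity degenerates to $\pm\delta\neq 0$, restriction 2) by irrationality/transcendence of the tangent (or cotangent) at nonzero rationals, and restriction 3) by Niven's theorem plus the fact that the quadratics $1-x^2=\pm x$ have no rational roots. There are two differences worth recording. First, the paper divides by $\cos\theta$ and works with $\big(1-\delta^2\big)\tan\theta=\delta$, simply asserting that $\cos(\pi n l/\gamma)\neq 0$; you instead divide by $\sin\theta$ and work with $\cot\theta=\delta-1/\delta$, so the excluded case surfaces explicitly as your $\cot\theta=0$ branch, where $\delta=1$, $\rho_m=1/d$, and $c_2(1/d^2)=d\cos(l/d)$ is killed precisely by the hypothesis $\cos(l/d)\neq 0$. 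This is a genuine improvement in completeness: your write-up shows exactly where the extra hypothesis in restriction 3) is needed and why restriction 2) does not need it (there $\theta$ is a nonzero rational, so $\cos\theta=0$ is impossible). Second, for restriction 2) you invoke Lindemann--Weierstrass to get transcendence of $\cot\theta$, where the paper only needs the weaker Lambert/Niven fact that $\tan$ is irrational at nonzero rationals; your tool is heavier than necessary but entirely valid.
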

\begin{proof}
If 1) holds, then $c_2\big((\pi n/\gamma)^2\big)=(-1)^{nk} d.$
Suppose 2) or 3) holds, and $c_2\big((\pi n/\gamma)^2\big)=0$ for some $n \in \mathbb N.$
Then $\cos \pi n l/\gamma \ne 0,$ and we have
\begin{equation} \bigg(1 - \Big(\frac{\pi n}{\gamma}d\Big)^2\bigg) \tan \frac{\pi n l}{\gamma} = \frac{\pi n}{\gamma}d.
\label{irrat}\end{equation}

If 2) holds, then $\pi n l/\gamma \in \mathbb Q,$ and $\tan \pi n l/\gamma$ is an  irrational number, see~\cite{niven}. Therefore, in the left part of~ \eqref{irrat} we have either 0 or an irrational number, and in the right part a non-zero rational number, which leads to a contradiction.

If 3) holds, then $\pi n l/\gamma$ denotes a rational number of degrees, and, according to~\cite{niven}, the inclusion
\begin{equation*}\tan \frac{\pi n l}{\gamma} \in (\mathbb R \setminus \mathbb Q) \cup \{ 0\} \cup \{ \pm 1 \}\end{equation*}
is fulfilled.
The cases when $\tan \pi n l/\gamma$ is irrational or zero are analogous to 2). If $\tan \pi n l/\gamma=\pm1,$ then~\eqref{irrat} also leads to a contradiction because the rational number $\pi n d/\gamma$ can not satisfy any of the quadratic equations $1 - x^2 = \pm x.$
\end{proof}
%%%%%%%%%%%%%%%%%%%%%%%%%%
\section{Necessary and sufficient conditions}
Further, we obtain necessary and sufficient conditions on the spectrum in the case
\begin{equation} \label{case}
l=\gamma, \quad q \in W^1_2[0, \gamma] \cap W^1_2[a, b].\end{equation}
Provided these restrictions are satisfied, formula~\eqref{Delta} can be rewritten in the form
\begin{equation} \begin{split}
\Delta(\lambda) = \frac{d^2 \rho}{2} \sin 2 \rho l - d \cos 2 \rho l - \frac{\sin 2 \rho l}{\rho} -d q(\gamma) \frac{\sin^2 \rho l}{\rho^2} +  \\
\left[ \frac{\rho^2 d^2 - 1}{\rho^2} \sin \rho l -  \frac{d}{\rho} \cos \rho l\right] \int_0^l q(t) \sin \rho t \, dt
- \frac{\sin \rho l}{\rho^2} \int_0^l \sin \rho t q(b-t) \, dt.
\end{split}
\label{2}
\end{equation}
Substituting  $\lambda = (\pi n/l)^2$ and $\lambda = z_n^2$ into~\eqref{2}, using integration by parts, we get the following proposition.
\begin{statement} \label{necessity con}
For $n \in \mathbb N,$ the formulae
\begin{equation*}\Delta\bigg(\Big(\frac{\pi n}{l}\Big)^2\bigg) = -d + d \bigg(\frac{l}{\pi n}\bigg)^2 \big[q(l) - (-1)^n q(0)+  \kappa_n\big], \end{equation*}
\begin{equation*}\Delta(z_n^2) =
(-1)^n \frac{\sin z_n l}{z_n^3}  \Big[ \frac{1}{d^2} + q(a) - q(l) - (-1)^n q(b) +\eta_n \Big]\end{equation*}
 are fulfilled with some $\{ \kappa_n \}_{n \ge 1}, \{ \eta_n \}_{n \ge 1} \in l_2.$
\end{statement}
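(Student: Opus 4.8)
The plan is to substitute the two families of points $\lambda=(\pi n/l)^2$ and $\lambda=z_n^2$ directly into representation~\eqref{2}, then reduce each occurring integral by a single integration by parts (legitimate since $q\in W_2^1$ on each segment) and control the remainders using the asymptotics~\eqref{z_n} together with Bessel-type estimates, invoking the Riesz basis property from Lemma~\ref{riesz} where the system $\{\cos z_n t\}$ is involved.

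For the first formula I set $\rho=\pi n/l$. Because $\sin\pi n=0$ and $\cos\pi n=(-1)^n$, almost every term in~\eqref{2} collapses: the terms carrying $\sin 2\rho l$ and $\sin^2\rho l$ vanish, the term $-d\cos 2\rho l$ contributes $-d$, the last integral vanishes, and in the bracket multiplying $\int_0^l q(t)\sin\rho t\,dt$ only $-\frac{d}{\rho}\cos\rho l=-\frac{d(-1)^n}{\rho}$ survives. It then remains to integrate $\int_0^l q(t)\sin\frac{\pi n}{l}t\,dt$ by parts: the boundary terms give $\frac{l}{\pi n}\big(q(0)-(-1)^n q(l)\big)$, while $\frac{l}{\pi n}\int_0^l q'(t)\cos\frac{\pi n}{l}t\,dt$ furnishes, after multiplication by the surviving prefactors, the $l_2$-sequence $\kappa_n$, its coefficients lying in $l_2$ since $q'\in L_2$. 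Gathering the factors $\frac1\rho\cdot\frac{l}{\pi n}=(l/\pi n)^2$ produces the asserted expression for $\Delta\big((\pi n/l)^2\big)$.

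For the second formula I set $\rho=z_n$ and exploit the defining relation $c_2(z_n^2)=0$, that is $d\cos z_n l=\frac{d^2z_n^2-1}{z_n}\sin z_n l$. The first crucial point is that this relation makes the bracket $\frac{z_n^2d^2-1}{z_n^2}\sin z_n l-\frac{d}{z_n}\cos z_n l$ \emph{exactly} vanish, so the whole term carrying $\int_0^l q(t)\sin z_n t\,dt$ disappears. The second crucial point concerns the three remaining trigonometric terms $\frac{d^2z_n}{2}\sin 2z_n l-d\cos 2z_n l-\frac{\sin 2z_n l}{z_n}$: substituting the same relation and combining it with $\sin^2+\cos^2=1$ yields the closed form $\sin^2 z_n l=\frac{d^2z_n^2}{d^4z_n^4-d^2z_n^2+1}$, whereupon these three terms collapse \emph{exactly} to $\frac{\sin^2 z_n l}{d z_n^2}$. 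Absorbing the term $-dq(\gamma)\sin^2 z_n l/z_n^2$ and factoring $(-1)^n\sin z_n l/z_n^3$ out of what is left, I finish by integrating $\int_0^l\sin z_n t\,q(b-t)\,dt$ by parts, whose boundary values are $q(b)$ and $q(b-l)=q(a)$, and by inserting the asymptotics $z_n\sin z_n l=(-1)^n d^{-1}+O(z_n^{-2})$ and $\cos z_n l=(-1)^n+O(z_n^{-2})$ that follow from~\eqref{z_n}; the cosine coefficients of $(q(b-\cdot))'\in L_2$ and the $O(z_n^{-2})$ remainders together form the $l_2$-sequence $\eta_n$, and $q(\gamma)=q(l)$ since $l=\gamma$.

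The main obstacle is the exact cancellation in the second formula. Naively each of the three trigonometric terms is of order $z_n$, and only the relation $c_2(z_n^2)=0$ --- through the closed form for $\sin^2 z_n l$ --- produces the collapse to order $z_n^{-2}$, i.e. the two extra powers of $1/z_n$ that lower $\Delta(z_n^2)$ to the level $\sin z_n l/z_n^3$. Securing this algebraic identity, rather than the routine integration by parts or the Riemann--Lebesgue/Bessel estimates for $\kappa_n$ and $\eta_n$, is the delicate part; the rest follows the pattern already established for the first formula.
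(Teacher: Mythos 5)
Your proposal is correct and follows essentially the same route as the paper, whose proof is the one-line remark ``substitute $\lambda=(\pi n/l)^2$ and $\lambda=z_n^2$ into~\eqref{2} and integrate by parts''; you have simply carried out those computations in full. In particular, your key identity $\frac{d^2 z_n}{2}\sin 2z_n l - d\cos 2z_n l - \frac{\sin 2z_n l}{z_n} = \frac{\sin^2 z_n l}{d z_n^2}$ at the zeros of $c_2$ checks out (it is equivalent to the trigonometric part equalling $-c_1(z_n^2)\,z_n^{-1}\sin z_n l$ with $c_1(z_n^2)=-\sin z_n l/(dz_n)$), and it is exactly what the paper uses implicitly when writing $\Delta(z_n^2)$ in the later sufficiency theorem.
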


Denote
\begin{equation*}Q_1(z) = \int_z^{b} q(t) \, dt, \quad Q_2(z) = \int_0^{l-z} q(a+t) \, dt.\end{equation*}
Integrating by parts, we obtain the following representation:
\begin{equation} \begin{split}
\Delta(\lambda) =  \frac{d^2 \rho}{2} \sin 2 \rho l - d \cos 2 \rho l - \frac{\sin 2 \rho l}{\rho} + \\
+d^2 q(0) \frac{\sin \rho l}{\rho} - d^2 \frac{\sin 2 \rho l}{2 \rho} q(l) + \frac{1}{2\rho} \int_0^{2l} \sin \rho t\, W(t) \, dt,
\end{split}
\label{1}
\end{equation}
where the function $W \in L_2[0, 2l]$ has the form
\begin{equation*}W(t) = \left\{
\begin{array}{cc}
 d q(l-t) - dq(l) - Q_1(l-t) + d^2 q'(l-t) - Q_2(t), & t \in [0, l) ; \\
-d q(t-l) - dq(l) - Q_1(t-l) + d^2 q'(t-l) - Q_2(2l-t), & t \in [l, 2l].
\end{array}
\right. \end{equation*}

With the help of formula~\eqref{1}, by the standard technique~\cite{[5]}, asymptotics of the spectrum can be obtained, i.e. the following theorem holds.
\begin{thm}
The following asymptotic formulae hold:
\begin{equation} \label{asymp}
\lambda_n = \rho_n^2, \quad \rho_n = \frac{\pi n}{2 l} + \frac{2}{d \pi n} + \frac{4 l \delta_n q(0)}{\pi^2 n^2} + \frac{\mu_n}{n^2}, \quad n \in {\mathbb N},
\end{equation}
where $\delta_n = \sin \pi n/2,$ $\{ \mu_n\}_{n \ge 1} \in l_2.$
\end{thm}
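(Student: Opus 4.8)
The plan is to treat representation~\eqref{1} as an entire function of $\rho$ whose leading part is $\Delta_0(\rho):=\tfrac{d^2\rho}{2}\sin 2\rho l$, to localize the zeros near the zeros $\rho=\pi n/(2l)$ of $\Delta_0$ by Rouch\'e's theorem, and then to extract the three explicit terms of~\eqref{asymp} by a two-step successive approximation. All the requisite machinery is the standard one from~\cite{[5]}.

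First I would record the growth estimates. Every summand of~\eqref{1} other than $\Delta_0$ carries a factor $e^{2l|\operatorname{Im}\rho|}$ at most, multiplied by a coefficient that is $O(1)$ or $O(|\rho|^{-1})$; in particular, the integral term is bounded by $C e^{2l|\operatorname{Im}\rho|}$ (and is $o$ of this on the relevant contours by the Riemann--Lebesgue lemma), since $W\in L_2[0,2l]$. Hence $|\Delta(\rho^2)-\Delta_0(\rho)|\le C e^{2l|\operatorname{Im}\rho|}$. On the other hand, away from $\delta$-neighbourhoods of the points $\pi k/(2l)$ one has $|\sin 2\rho l|\ge C_\delta e^{2l|\operatorname{Im}\rho|}$, so $|\Delta_0(\rho)|\ge C_\delta|\rho|e^{2l|\operatorname{Im}\rho|}$. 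Comparing the two bounds on the circles $|\rho|=\pi(n+\tfrac12)/(2l)$ and on the small circles $|\rho-\pi n/(2l)|=\delta$, for all large $n$ the inequality $|\Delta-\Delta_0|<|\Delta_0|$ holds, so Rouch\'e's theorem gives exactly one simple zero $\rho_n$ in each such small disk, whence $\rho_n=\pi n/(2l)+o(1)$.

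Next I would substitute $\rho=\rho_n=\pi n/(2l)+\varepsilon_n$ into $\Delta(\rho_n^2)=0$ via~\eqref{1}. The terms built from $\sin 2\rho l$ and $\cos 2\rho l$ acquire the factor $(-1)^n$, with $\sin 2\rho_n l=(-1)^n\sin 2l\varepsilon_n$ and $\cos 2\rho_n l=(-1)^n\cos 2l\varepsilon_n$; their leading parts $\tfrac{d^2\rho}{2}\sin 2\rho l$ and $-d\cos 2\rho l$ give the dominant balance $d^2\rho_n l\,\varepsilon_n-d=O(1/n)$, which forces $\varepsilon_n=\tfrac{2}{d\pi n}+O(1/n^2)$, the leading correction of~\eqref{asymp}. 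Carrying the expansion one order further, the corrections from these $(-1)^n$ terms are $O(1/n^2)$, while the separate summand $d^2q(0)\tfrac{\sin\rho l}{\rho}$ contributes at order $1/n$: since $\sin(\tfrac{\pi n}{2}+l\varepsilon_n)=\delta_n\cos l\varepsilon_n+\cos\tfrac{\pi n}{2}\sin l\varepsilon_n=\delta_n+O(1/n^2)$ and $1/\rho_n=\tfrac{2l}{\pi n}(1+O(1/n^2))$, solving for $\varepsilon_n$ produces the summand $-(-1)^n\delta_n q(0)\tfrac{4l}{\pi^2 n^2}$. The elementary identity $-(-1)^n\delta_n=\delta_n$ (trivial for even $n$, and valid for odd $n$ since then $(-1)^n=-1$) turns this into exactly $\tfrac{4l\delta_n q(0)}{\pi^2 n^2}$. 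The remaining $q(l)$ and $\sin 2\rho l/\rho$ terms, the Taylor remainders, and the integral $\tfrac{1}{2\rho}\int_0^{2l}\sin\rho_n t\,W(t)\,dt$ are all $O(1/n^2)$ and assemble into $\mu_n/n^2$.

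The hard part will be to prove that this error is genuinely of the form $\mu_n/n^2$ with $\{\mu_n\}\in l_2$, rather than merely $O(1/n^2)$; equivalently, I must show that $\beta_n:=\int_0^{2l}\sin\rho_n t\,W(t)\,dt$ defines an $l_2$ sequence. This is the Bessel property of the perturbed sine system $\{\sin\rho_n t\}_{n\ge1}$, which is a quadratically close perturbation of the orthogonal system $\{\sin(\pi n t/(2l))\}_{n\ge1}$ because $\sum_n(\rho_n-\pi n/(2l))^2<\infty$; the required lemma is standard and contained in~\cite{[5]}. Once $\{\beta_n\}\in l_2$ is available, $\tfrac{1}{2\rho_n}\beta_n=\tfrac{l}{\pi n}\beta_n(1+o(1))$ contributes a term in $l_2/n^2$, and together with the genuinely $O(1/n^3)$ remainders (which also lie in $l_2/n^2$, since $\{1/n\}\in l_2$) this yields the claimed $\mu_n/n^2$, completing the derivation of~\eqref{asymp}.
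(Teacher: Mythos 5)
Your proposal is correct and takes essentially the route the paper intends: the paper gives no detailed proof of this theorem, saying only that the asymptotics follow from representation~\eqref{1} ``by the standard technique~\cite{[5]}'', which is exactly the Rouch\'e localization, the successive-approximation extraction of the terms $\frac{2}{d\pi n}$ and $\frac{4l\delta_n q(0)}{\pi^2 n^2}$, and the quadratic-closeness argument giving $\{\beta_n\}\in l_2$ that you carry out. (One harmless imprecision: for even $n$ one has $\sin\rho_n l=\delta_n+O(1/n)$ rather than $\delta_n+O(1/n^2)$, but since this factor is multiplied by $1/\rho_n$, its contribution to your $\varepsilon_n$ is $O(1/n^3)$ and is absorbed into $\mu_n/n^2$, so the conclusion stands.)
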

%С помощью этих асимптотик формула~\eqref{adamar} может быть уточнена следующим образом:
%\begin{equation} \label{prod}
%\Delta(\lambda) = l d^2 (\lambda - \lambda_0)\prod_{n=1}^\infty \frac{\lambda_n - \lambda}{(\frac{\pi n}{2l})^2}
%\end{equation}
%(доказательство аналогично теореме~1.1.4 в~\cite{[5]} для классического оператора Штурма--Лиувилля).
Using this asymptotics, we refine formula~\eqref{adamar} in the following proposition.
\begin{statement}
The characteristic function is reconstructed by the formula
\begin{equation} \label{prod}
\Delta(\lambda) = l d^2 (\lambda - \lambda_0)\prod_{n=1}^\infty \frac{\lambda_n - \lambda}{(\frac{\pi n}{2l})^2}
\end{equation}
\end{statement}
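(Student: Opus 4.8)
The plan is to recognize the right-hand side of \eqref{prod} as a Hadamard-type canonical product that shares all its zeros with $\Delta(\lambda)$, and then to fix the normalizing constant $l d^2$ by matching the growth of both sides along the negative real axis with the explicit leading term of representation \eqref{1}. This merely refines the factorization \eqref{adamar} by specifying how the individual zero-factors are normalized.

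First I would check convergence and the Hadamard step. By the asymptotics \eqref{asymp} one has $\lambda_n = (\pi n/2l)^2 + O(1)$, hence $\lambda_n/(\pi n/2l)^2 = 1 + O(n^{-2})$, so the product $\prod_{n\ge 1}(\lambda_n-\lambda)/(\pi n/2l)^2$ converges absolutely and uniformly on compacta to an entire function of order $1/2$. Writing $P(\lambda) := (\lambda-\lambda_0)\prod_{n\ge 1}(\lambda_n-\lambda)/(\pi n/2l)^2$, the function $P$ is entire of order $1/2$ with exactly the zeros $\{\lambda_n\}_{n\ge 0}$, counted with multiplicity, i.e. the same zeros as $\Delta$. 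Therefore $\Delta(\lambda)/P(\lambda)$ is entire, zero-free and of order at most $1/2<1$; by Hadamard's theorem it must be a nonzero constant, so $\Delta(\lambda)=c\,P(\lambda)$, and it remains only to prove $c = l d^2$.

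To compute $c$ I would compare $P$ with the model product obtained by freezing the zeros at their asymptotic positions. Setting $\mu_n:=(\pi n/2l)^2$, the classical sine product gives
\[
\Delta_0(\lambda) := \lambda \prod_{n\ge 1}\frac{\mu_n-\lambda}{\mu_n} = \frac{\rho \sin 2\rho l}{2l},
\]
so that $l d^2 \Delta_0(\lambda) = \tfrac{d^2\rho}{2}\sin 2\rho l$, which is precisely the dominant term of \eqref{1}. Letting $\lambda\to-\infty$ along the real axis and inspecting \eqref{1}, all remaining summands are of strictly lower exponential–polynomial order (in particular the $W$-integral is controlled via $W\in L_2$), whence $\Delta(\lambda)=\tfrac{d^2\rho}{2}\sin 2\rho l\,(1+o(1)) = l d^2\Delta_0(\lambda)\,(1+o(1))$.

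The main obstacle, and the only genuinely technical point, is to show that $P(\lambda)/\Delta_0(\lambda)\to 1$ as $\lambda\to-\infty$. From
\[
\frac{P(\lambda)}{\Delta_0(\lambda)} = \frac{\lambda-\lambda_0}{\lambda}\prod_{n\ge 1}\frac{\lambda_n-\lambda}{\mu_n-\lambda},
\]
the prefactor tends to $1$, and for the product I would take logarithms and estimate $\sum_{n\ge 1}\log\bigl(1+(\lambda_n-\mu_n)/(\mu_n-\lambda)\bigr)$ with $\lambda=-t^2$, $t\to+\infty$. The delicate feature is that the shifts $\lambda_n-\mu_n$ do not vanish (by \eqref{asymp} they tend to the constant $2/(ld)$), so a term-by-term bound is insufficient; the decay must come from summation, using $\sum_{n\ge 1}(\mu_n+t^2)^{-1}=O(t^{-1})$ as $t\to\infty$ (e.g. via the $\coth$ partial-fraction identity). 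Combined with the boundedness of $\lambda_n-\mu_n$, this yields $\sum_{n\ge 1}(\lambda_n-\mu_n)/(\mu_n+t^2)\to 0$, so the product tends to $1$. Dividing the two matched asymptotics then gives $\Delta(\lambda)/(l d^2\Delta_0(\lambda)) = (c/l d^2)(P/\Delta_0)\to c/(l d^2)$ while the same ratio tends to $1$, forcing $c=l d^2$ and establishing \eqref{prod}.
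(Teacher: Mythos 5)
Your proof is correct and follows essentially the same route as the paper's: both reduce the claim via Hadamard factorization to determining a single multiplicative constant, and both fix that constant by comparing $\Delta(\lambda)$ with $\Delta^*(\lambda)=\tfrac{d^2\rho}{2}\sin 2\rho l$ as $\lambda\to-\infty$, the crux in each case being the limit $\prod_{n}\big(\lambda_n-\lambda\big)/\big((\tfrac{\pi n}{2l})^2-\lambda\big)\to 1.$ The only differences are technical details: you estimate the tail sum via the $\coth$ partial-fraction identity (giving $O(|\lambda|^{-1/2})$) where the paper uses Young's inequality (giving $O(|\lambda|^{-1/4})$), and by working directly with the product in~\eqref{prod} you avoid the paper's separate bookkeeping of the multiplicity $k_0$ of the zero eigenvalue that arises from routing through~\eqref{adamar}.
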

\begin{proof}
Introduce the function
\begin{equation*}\Delta^*(\lambda) = \frac{\rho d^2}{2} \sin 2 \rho l =\frac{\lambda d^2}{4l} \prod_{n=1}^\infty \left(1 - \frac{\lambda}{(\frac{\pi n}{2 l})^2}\right).\end{equation*}
Let the assumptions about the spectrum made in Proposition~\ref{s1} hold.
Using~\eqref{adamar}, for $k_0 \ge 1,$ we have
\begin{equation*}\frac{\Delta(\lambda)}{\Delta^*(\lambda)} = C\frac{4l}{d^2}  \prod_{n=0}^{k_0-1} \frac{\lambda (\frac{\pi n}{2l})^2}{ (\frac{\pi n}{2l})^2 - \lambda} \prod_{n=k_0}^\infty \frac{ (\frac{\pi n}{2l})^2}{\lambda_n} \frac{\lambda_n - \lambda}{ (\frac{\pi n}{2l})^2 - \lambda}.
\end{equation*}

Let us prove that
\begin{equation} \label{lim1}
\lim_{\lambda \to - \infty} \prod_{n=k_0}^\infty \frac{\lambda_n - \lambda}{ (\frac{\pi n}{2l})^2 - \lambda} = 1,
\end{equation}
which is equivalent to
 $$\lim_{\lambda\to-\infty}\sum_{n=k_0}^\infty \ln (1 + x_n(\lambda)) = 0, \quad x_n(\lambda) := \frac{\lambda_n -  (\frac{\pi n}{2l})^2}{ (\frac{\pi n}{2l})^2 - \lambda}.$$
Since $\lambda_n -  [\pi n/(2l)]^2=O(1),$ for a sufficiently small $\lambda<0,$ we have $|x_n(\lambda)| <1/2,$ and using the Taylor series for $\ln (1+ x_n(\lambda)),$ we obtain the inequality
 $$\left|\sum_{n=k_0}^\infty \ln (1 + x_n(\lambda))\right| \le \sum_{n=k_0}^\infty |\ln (1 + x_n(\lambda)| \le \sum_{n=k_0}^\infty 2|x_n(\lambda)|.$$
In turn, one can estimate
 $$|x_n(\lambda)| \le \frac{M}{(\frac{\pi n}{2l})^2+|\lambda|} \le \frac{M}{\frac34 (\frac{\pi n}{2l})^2+\frac14 |\lambda|}.$$
By Young's inequality, we get $|x_n(\lambda)|\le M[\pi n/(2l)]^{-3/2}|\lambda|^{-1/4}$ and
 $$\sum_{n=k_0}^\infty |x_n(\lambda)| \le \frac{M_2}{|\lambda|^{\frac14}} \to 0, \quad \lambda \to -\infty.$$
 Using three previous formulae, we obtain~\eqref{lim1}.

From~\eqref{1} it is clear that $\Delta(\lambda)/\Delta^*(\lambda) \to 1$ as $\lambda \to -\infty.$ Taking into account~\eqref{lim1}, we calculate
$$C=  \frac{d^2}{4l} (-1)^{k_0-1} \prod_{n=1}^{k_0-1} \Big(\frac{2 l}{\pi n}\Big)^{2} \prod_{n=k_0}^\infty \frac{\lambda_n}{(\frac{\pi n}{2 l})^2}.$$
Substituting this formula into \eqref{adamar}, we obtain~\eqref{prod}.

The case  $k_0=0$ is proceeded analogously.
\end{proof}

Now, let an arbitrary sequence $\{ \lambda_n\}_{n\ge 0}$ be given the terms of which satisfy the asymptotic formulae
 \begin{equation} \label{sufficiency asymp}
\lambda_n = \rho_n^2, \quad \rho_n = \frac{\pi n}{2l} + \frac{2}{d \pi n} + \frac{4 l \delta_n u}{\pi^2 n^2} + \frac{\mu_n}{n^2}, \quad n \in \mathbb N,
\end{equation}
where $u \in \mathbb C,$ $\{ \mu_n\}_{n \ge 1} \in l_2.$
Our next goal is to obtain the conditions which one should impose on this sequence to ensure it is the spectrum of some boundary value problem~\eqref{*}--\eqref{Dirichlet} in particular case~\eqref{case}.
\begin{thm} Let the function $\Delta(\lambda)$ be constructed via formula \eqref{prod} with arbitrary numbers $\lambda_n$ of form \eqref{sufficiency asymp}, then the following representation holds:
\begin{equation} \label{sufficiency Delta}
\Delta(\lambda) = \tilde{\Delta}(\lambda) + C_0 \frac{\sin 2 \rho l}{\rho} + \int_0^{2l} W(t) \frac{\sin \rho t}{2\rho} dt, \quad W \in L_2(0, 2l),
\end{equation}
where
\begin{equation*}\tilde{\Delta}(\lambda) = \frac{d^2 \rho}{2} \sin 2 \rho l - d \cos 2 \rho l + d^2 u \frac{\sin \rho l}{\rho}.\end{equation*}
\end{thm}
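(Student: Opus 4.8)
The plan is to read \eqref{sufficiency Delta} as a Paley--Wiener statement. Multiplying through by $2\rho$, it is equivalent to the assertion that
\[
G(\rho) := 2\rho\big(\Delta(\lambda)-\tilde\Delta(\lambda)\big) - 2C_0\sin 2\rho l = \int_0^{2l} W(t)\sin\rho t\,dt
\]
for a suitable constant $C_0$ and some $W\in L_2(0,2l)$. Since $\Delta(\rho^2)$ and $\tilde\Delta(\rho^2)$ are entire and even in $\rho$, the function $G$ is entire and odd in $\rho$. Hence it suffices to show that $G$ has exponential type at most $2l$ and that its restriction to the real axis lies in $L_2(\mathbb R)$; the sine form of the Paley--Wiener theorem then furnishes the required $W\in L_2(0,2l)$, and dividing by $2\rho$ returns \eqref{sufficiency Delta}. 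Thus the work divides into a growth estimate and the extraction of $C_0$ together with an $L_2$ remainder from the product \eqref{prod}.

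For the asymptotic analysis I would compare $\Delta$ with the model product $\Delta^*(\lambda)=\tfrac{d^2\rho}{2}\sin 2\rho l$, whose zeros are $\mu_n^0:=(\pi n/(2l))^2$, writing
\[
\frac{\Delta(\lambda)}{\Delta^*(\lambda)} = \frac{\lambda-\lambda_0}{\lambda}\prod_{n=1}^\infty\frac{\lambda_n-\lambda}{\mu_n^0-\lambda}
\]
and taking logarithms. A direct computation from \eqref{sufficiency asymp} gives $\lambda_n-\mu_n^0 = \tfrac{2}{dl} + \tfrac{4\delta_n u}{\pi n} + \varepsilon_n$ with $\{\varepsilon_n\}\in l_2$. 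The mechanism is that $\sum_{n\ge 1}(\mu_n^0-\lambda)^{-1}$ is, up to elementary factors, the partial-fraction expansion of $\cot 2\rho l$; feeding in the constant shift $2/(dl)$ reproduces precisely the term $-d\cos 2\rho l$ of $\tilde\Delta$, while the coefficients $4\delta_n u/(\pi n)$, summed analogously, reproduce the term $d^2 u\,\tfrac{\sin\rho l}{\rho}$. After these two contributions are matched against $\tilde\Delta$, the constant $C_0$ emerges as the coefficient of $\sin 2\rho l/\rho$ in the next order of the expansion of $\Delta-\tilde\Delta$ (it collects the $\lambda_0$-term and the $O(\rho^{-2})$ part of the cotangent sum), and the $l_2$ remainder $\{\varepsilon_n\}$ produces exactly the $L_2$ sine-transform term. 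This is the same pattern that is visible in the direct formula \eqref{1}.

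The exponential type is controlled by the density of the zeros: since $\sqrt{\lambda_n}=\rho_n = \pi n/(2l)+o(n)$, the canonical product \eqref{prod} has order $1/2$ and exponential type $2l$ in $\rho$, as do $\tilde\Delta$ and $\sin 2\rho l$; hence $G$ has type at most $2l$. To make the $L_2(\mathbb R)$ bound rigorous, I would estimate $G$ on the contours $|\rho|=\pi(n+\tfrac12)/(2l)$ by means of the lower bound $|\sin 2\rho l|\ge M^{-1}e^{2l|\mathrm{Im}\,\rho|}$ from the proof of Lemma~\ref{riesz}, combined with the maximum-modulus principle, and then transfer the resulting bound to the real axis; the expansion above shows that the real restriction of $G$ is square-integrable, with its $L_2$-norm controlled by the $l_2$-norm of $\{\varepsilon_n\}$.

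\emph{The main obstacle} is the asymptotic step: converting the formal logarithmic expansion of the infinite product into a rigorous, uniform-on-$\mathbb R$ formula with genuine $L_2$ control of the remainder rather than mere $o(1)$ decay. The delicate points are the oscillatory behaviour of $\Delta/\Delta^*$ near the zeros $\mu_n^0$, the justification of interchanging the summation with the logarithm and with the limit, and the careful handling of the alternating factors $\delta_n=\sin(\pi n/2)$ when summing the $u$-term; these force the estimates to be carried out first on the circles that avoid the zeros and only afterwards transferred to the real axis.
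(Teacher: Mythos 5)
Your Paley--Wiener framing is legitimate in principle: an entire odd function of exponential type $\le 2l$ whose restriction to $\mathbb R$ lies in $L_2(\mathbb R)$ is indeed a sine transform of some $W\in L_2(0,2l)$. But the proposal has a genuine gap at exactly the decisive step: you must prove $G|_{\mathbb R}\in L_2(\mathbb R)$ (after choosing $C_0$ correctly), and none of the tools you list can deliver this. The contour estimates via the lower bound for $|\sin 2\rho l|$ together with the maximum-modulus principle yield \emph{sup-norm} bounds, i.e.\ $|G(\rho)|\le M$ on and inside the circles, and boundedness on the real axis is far from square-integrability; a Liouville-type argument can only certify that something is constant or bounded, never that it decays in $L_2$. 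Likewise, the cotangent-expansion matching is correct to first order (the constant shift $2/(dl)$ in $\lambda_n-(\pi n/(2l))^2$ does generate $-d\cos 2\rho l$, and the alternating $u$-terms do generate $d^2u\,\rho^{-1}\sin\rho l$), but the logarithmic expansion of the product is valid only away from the zeros $(\pi n/(2l))^2$; on the real axis it degenerates near every such point, and both the definition of $C_0$ and the $L_2$ control of the remainder live precisely in the regime where the expansion is not justified. You flag this yourself as ``the main obstacle,'' but flagging it is not resolving it: the entire analytic content of the theorem sits in that step.

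It is instructive to see how the paper avoids ever needing a real-axis $L_2$ bound. It samples $\Delta-\tilde\Delta$ only at the discrete points $\lambda=(\pi k/(2l))^2$, the zeros of $\Delta^*(\lambda)=\frac{d^2\rho}{2}\sin 2\rho l$, where the leading (non-$L_2$) terms vanish identically; it proves that the resulting sequence $\theta_k=\frac{\pi k}{2l}\bigl[\Delta-\tilde\Delta\bigr]\big|_{\lambda=(\pi k/(2l))^2}$ belongs to $l_2$ by the product estimates $d_k=O(1)$, $d_k-\tilde d_k=O(1/k^2)$, $\lambda_k-\tilde\lambda_k=\nu_k/k$ with $\{\nu_k\}\in l_2$; it then \emph{defines} $W$ as the $L_2(0,2l)$ function whose coefficients with respect to the orthogonal basis $\{\sin(\pi k t/(2l))\}_{k\ge1}$ are $2\theta_k$; and finally it shows that $P(\lambda)=\lambda\bigl(\theta(\rho)/(2\rho)-\Delta(\lambda)+\tilde\Delta(\lambda)\bigr)/\Delta^*(\lambda)$ is entire and bounded, hence constant, which is exactly \eqref{sufficiency Delta}. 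In other words, the continuous $L_2(\mathbb R)$ estimate your plan requires is replaced by a discrete $l_2$ estimate at well-chosen interpolation nodes, and Liouville's theorem upgrades the discrete match to the global identity. To salvage your route you would need rigorous real-axis asymptotics of a sine-type product with $l_2$-perturbed zeros, and the standard way to obtain those is to run essentially this interpolation argument first --- at which point you have reproduced the paper's proof rather than bypassed it.
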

\begin{proof} The proof of the theorem is based on the approach used in~\cite{results}, where the representation for another characteristic function was obtained with fewer terms.

1. Denote by $\tilde{\lambda}_n = \tilde \rho_n^2,$ $n \ge 0,$ the zeros of the function $\tilde{\Delta}(\lambda),$ wherein $\tilde \rho_n$ have the same asymptotics as $\rho_n$ in formula \eqref{asymp}.
Note that $\tilde \Delta(\lambda)$ is recovered by formula~\eqref{prod}  provided $ \lambda_n $ is replaced  by $\tilde \lambda_n$ in it.

We consider the numbers
\begin{equation*} \theta_k =\frac{\pi k}{2l} \big[\Delta(\lambda) - \tilde{\Delta}(\lambda)\big]\Big|_{\lambda=(\frac{\pi k}{2l})^2}, \quad k \in {\mathbb N}.\end{equation*}
Introduce the function $\Delta^*(\lambda) =d^2 \rho/2 \sin 2 \rho l.$ Using~\eqref{prod}, we write
\begin{equation*}\Delta(\lambda) = \Delta^*(\lambda) F(\lambda), \quad F(\lambda) := \prod_{n=0}^\infty \frac{\lambda_n - \lambda}{(\frac{\pi n}{2l})^2-\lambda}, \end{equation*}
and, analogously,
\begin{equation*}\tilde\Delta(\lambda) = \Delta^*(\lambda) \tilde F(\lambda), \quad \tilde F(\lambda) := \prod_{n=0}^\infty \frac{\tilde\lambda_n - \lambda}{(\frac{\pi n}{2l})^2-\lambda}.\end{equation*}
Taking into account these represenations, we have
\begin{equation} \theta_k = \frac{\pi k}{2l} \left.[\Delta^*(\lambda)]'\right|_{\lambda=(\frac{\pi k}{2l})^2} \left[\Big(\tilde\lambda_k - \Big(\frac{\pi k}{2l}\Big)^2\Big) \tilde d_k - \Big(\lambda_k - \Big(\frac{\pi k}{2l}\Big)^2\Big) d_k \right], \quad k \in {\mathbb N},
\label{theta_k}\end{equation}
where
\begin{equation*} d_k = \prod_{n \ne k} \frac{\lambda_n - (\frac{\pi k}{2l})^2}{(\frac{\pi n}{2l})^2 - (\frac{\pi k}{2l})^2}, \quad \tilde d_k = \prod_{n \ne k} \frac{\tilde \lambda_n - (\frac{\pi k}{2l})^2}{(\frac{\pi n}{2l})^2 - (\frac{\pi k}{2l})^2}.
\label{d_k}\end{equation*}
Further, we estimate $d_k$ (for $\tilde d_k,$ the calculations are analogous) and $\tilde d_k - d_k.$ We rewrite the first coefficient in the form
\begin{equation} d_k = \prod_{\substack{n\le N,\\ n \ne k}} \big(1+ x_{n,k}\big) \exp H_k, \quad H_k := \sum_{\substack{n > N, \\ n \ne k}} \ln \big(1 + x_{n,k}\big), \quad x_{n,k} := \frac{\lambda_n - (\frac{\pi n}{2l})^2}{(\frac{\pi n}{2l})^2-(\frac{\pi k}{2l})^2}, \label{star}
\end{equation}
with a certain fixed $N.$
By virtue of~\eqref{sufficiency asymp}, we have $\lambda_n - [\pi n/(2l)]^2 = O(1),$ and one can choose $N$ independent of $k$ such that $|x_{n,k}| < 1/2$ for $n > N.$ Using the Taylor series for $\ln (1 + x_{n,k}),$ we can get the estimate
\begin{equation}|H_k| \le 2 \sum_{\substack{n > N, \\ n \ne k}} |x_{n,k}| \le 2C \sum_{n > k} \frac{1}{(n-k)^2} = O(1).
\label{H_k}\end{equation}
Then, from~\eqref{star} we obtain
$d_k = O(1).$  Let us prove that $d_k-\tilde d_k=O(1/k^2).$ For this purpose, we write
\begin{equation*}d_k - \tilde d_k = \Biggr( \prod_{\substack{n \le N, \\ n \ne k}} \big(1 + x_{n,k}\big) - \prod_{\substack{n\le N, \\ n \ne k}} \big(1 + \tilde x_{n,k}\big)\Biggr) \exp H_k +
 \prod_{\substack{n\le N, \\ n \ne k}} \big(1 + \tilde x_{n,k}\big) \big(\exp H_k - \exp \tilde H_k\big),\end{equation*}
 where $\tilde H_k$ and $\tilde x_{n,k}$ are introduced analogously to $H_k$ and $x_{n,k}.$
The first summand in this equality is estimated as $O(1/k^2)$ since $x_{n,k} = O(1/{k^2})$ and $\tilde x_{n,k} = O(1/{k^2})$ for $n<N,$ and~\eqref{H_k} holds as well.
To estimate the second summand, we note that from the Taylor series for $\ln (1 + x_{n,k}),$ $k > N,$ one can obtain the formula
\begin{equation*}H_k=\sum_{\substack{n>N, \\ n\ne k}} \Big(x_{n,k} +O\big(x^2_{n,k}\big)\Big);\end{equation*}
the analogous formula holds for $\tilde H_k.$ Using them and the estimate for the difference between the values of a function in terms of its derivative, we obtain
\begin{multline*} \big|\exp H_k - \exp \tilde H_k\big| \le \max_{z \in [H_k, \tilde H_k]} |\exp z| \,\big|H_k - \tilde H_k \big| \le \\
\le C \sum_{\substack{n>N, \\ n\ne k}} \big|x_{n,k} - \tilde x_{n,k}\big| + C \sum_{\substack{n>N, \\ n\ne k}} \frac{1}{k^2(n-k)^2} \le C\sum_{\substack{n>N, \\ n\ne k}} \frac{\hat\mu_n}{n(n^2 - k^2)}+\frac{C}{k^2},\end{multline*}
where $\{ \hat\mu_n \}_{n \ge 0} \in l_2$ by virtue of asymptotic formulae~\eqref{sufficiency asymp}.
Moreover, we have
\begin{equation*}\sum_{\substack{n\ge 1, \\ n\ne k}} \frac{\hat\mu_n}{n(n^2 - k^2)} \le \frac{1}{k} \sum_{\substack{n\ge 1, \\ n\ne k}} \frac{\hat\mu_n}{n(n-k)}\le \frac{1}{k} \sqrt{\sum_{n=1}^\infty \hat\mu_n^2} \sqrt{\sum_{\substack{n\ge 1, \\ n\ne k}} \frac{1}{n^2 (n-k)^2}} \le \frac{C}{k^2},\end{equation*}
which implies $|\exp H_k - \exp \tilde H_k|=O(1/k^2)$ and $d_k-\tilde d_k=O(1/k^2).$

It follows from asymptotics~\eqref{sufficiency asymp} that
\begin{equation*}\lambda_k - \tilde \lambda_k = \frac{\nu_k}{k}, \quad \lambda_k - \Big(\frac{\pi k}{2l}\Big)^2 = O(1), \; \tilde\lambda_k - \Big(\frac{\pi k}{2l}\Big)^2 = O(1),\end{equation*}
where $\{ \nu_k \}_{k \ge 1} \in l_2.$
Now
\begin{equation*}k \left[ \bigg(\lambda_k - \Big(\frac{\pi k}{2l}\Big)^2\bigg) d_k - \bigg(\tilde\lambda_k - \Big(\frac{\pi k}{2l}\Big)^2\bigg) \tilde d_k \right] = k \left[ d_k(\lambda_k - \tilde \lambda_k) + \bigg(\tilde\lambda_k - \Big(\frac{\pi k}{2l}\Big)^2\bigg)(d_k - \tilde d_k) \right] \in l_2,\end{equation*}
and from~\eqref{theta_k} we get $\{ \theta_k\}_{k \ge 1} \in l_2. $

2. Since the system  $\{ \sin \pi kt/(2l) \}_{k \ge 1}$ is an orthogonal basis in the space $L_2(0, 2l),$ there exists a function $W \in L_2(0, 2l)$ such that
\begin{equation*}\int_0^{2l}  W(t) \sin \frac{\pi k}{2l} t\, dt = 2 \theta_k.\end{equation*}
Introduce
\begin{equation*}\theta(\rho) = \int_0^{2l} W(t) \sin \rho t \, dt, \quad P(\lambda) = \lambda \frac{\theta(\rho)/(2\rho) - \Delta(\lambda) + \tilde\Delta(\lambda)}{\Delta^*(\lambda)} = \frac{\theta(\rho)}{d^2\sin 2\rho l} + \lambda \big(\tilde F(\lambda) - F(\lambda)\big),\end{equation*}
wherein the function $P(\lambda)$ is entire. Proceeding analogously to the proof in part~1, one can show that $|\tilde F(\lambda) - F(\lambda)| \le M/k^{2}$
on each circle $|\lambda| = [\pi k/(2l) + \pi/(4l)]^2,$ $k>N.$
Hence, it follows that the function $P(\lambda)$ is bounded by modulus in the whole complex plane, and, by Liouville's theorem, $P(\lambda) \equiv C_0.$ Thus, representation~\eqref{sufficiency Delta} is proved.
\end{proof}

Let us construct $\Delta(\lambda)$ by formula~\eqref{prod} with given $\lambda_n$ having asymptotics~\eqref{sufficiency asymp}. It is proved that this function has  form~\eqref{sufficiency Delta} with some $W \in L_2(0, 2l)$ and $C_0 \in \mathbb C.$ It is easy to see that the function $W$ is uniquely determined:
\begin{equation} \label{W recovery}
W(t) = \frac1l \sum_{n=1}^\infty \beta_n \sin \frac{\pi n}{2l}t, \quad
\beta_n := \frac{\pi n}{l} \Delta\bigg(\Big(\frac{\pi n}{2l}\Big)^2 \bigg) + \frac{ \pi n}{l}(-1)^nd  - 2 d^2 u \delta_n, \; n \in \mathbb N.
\end{equation}
According to formula~\eqref{1}, the constructed $\Delta(\lambda)$ is the characteristic function of the boundary value problem only if, for $W,$ there is the representation
\begin{equation}W(t) = \left\{
\begin{array}{cc}
 d g(l-t) - dg(l) - G_1(l-t) + d^2 g'(l-t) - G_2(t), & t \in [0, l), \\
-d g(t-l) - dg(l) - G_1(t-l)+ d^2 g'(t-l) - G_2(2l-t), & t \in [l, 2l],
\end{array}
\right. \label{W}\end{equation}
with functions $g,$ $G_1,$ and $G_2$ satisfying the following conditions:
\begin{enumerate}

\item[(A)]  $g \in W^1_2[0, l],$ $G_2 \in W^2_2[0, l],$ and $G_1(z) = \int_z^l g(t) \, dt,$ $z \in [0, l];$

\item[(B)] $g(l) = -2(C_0+1)/d^2$ and $g(0) = u;$

\item[(C)] $ G_2(l) = 0.$
\end{enumerate}
It is easy to see that if representation~\eqref{W} holds,
one can reconstruct $g,$ $G_1,$ and $G_2$ sequentially by the formulae
\begin{equation} \left.
\begin{array}{c}
\displaystyle g(t) = \frac{1}{2d} \{ W(l-t) - W(l+t) \}, \\[2mm]
G_1(t) = \int_t^l g(w) \, dw, \\[3mm]
G_2(t) =
-d g(l-t) - dg(l) - G_1(l-t) + d^2 g'(l-t) - W(2l - t),
\end{array}\right\}\label{gg}\end{equation}
where $t \in [0, l].$

Conversely, suppose the function $W(t)$ from representation~\eqref{sufficiency Delta} has form~\eqref{W}. We consider
\begin{equation*}q(t)  = \left\{
\begin{array}{cc}
g(t), & t \in [0, \gamma],\\
G'_2(l-t+a), & t \in [a, b].
\end{array}\right.\end{equation*}
Condition (A) yields that $q \in W^1_2[0, \gamma] \cap W^1_2[a, b].$
Comparing formula~\eqref{1} with \eqref{sufficiency Delta} under conditions (B)--(C), we conclude that the characteristic function of boundary value problem~\eqref{*}--\eqref{Dirichlet} with the constructed $q$ coincides with $\Delta(\lambda),$ and $\{ \lambda_n \}_{n \ge 0}$ is the spectrum of this boundary value problem.

Further, we obtain conditions that imply form~\eqref{W} of the function $W.$ For this purpose, we need two following lemmas.
\begin{lemma}[\cite{results}] Suppose $\{ \gamma_k\}_{k \ge 1} \in l_2$ and  $f \in L_2(0, l).$ We have $f \in W^1_2[0, l]$ if and only if the asymptotics
\begin{equation*}\int_0^l  f(x) \sin \Big(\frac{\pi k}{l} + \gamma_k\Big)x \, dx = \frac{l}{\pi k}\big( w_1 - (-1)^k  w_2\big) + \frac{\tilde \gamma_k}{k}, \; k \in {\mathbb N}, \quad \{ \tilde \gamma_k \}_{k \ge 1} \in l_2, \end{equation*}
holds, wherein $w_1=f(0),$ $w_2=f(l).$
    \label{lemma buterin}
 \end{lemma}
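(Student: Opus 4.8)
The plan is to prove both implications by integrating by parts with the perturbed frequencies $\omega_k := \pi k/l + \gamma_k$, reducing everything to the Bessel and completeness properties of the systems $\{1\}\cup\{\cos\omega_k x\}_{k\ge1}$ and $\{\sin\omega_k x\}_{k\ge1}$ in $L_2(0,l)$. Since $\{\gamma_k\}\in l_2$, a direct estimate gives $\|\cos\omega_k x - \cos(\pi k x/l)\|_{L_2} + \|\sin\omega_k x - \sin(\pi k x/l)\|_{L_2}\le C|\gamma_k|$, so these systems are quadratically close to the orthonormal systems $\{1\}\cup\{\cos(\pi k x/l)\}$ and $\{\sin(\pi k x/l)\}$; together with their completeness this makes both of them Riesz bases of $L_2(0,l)$, exactly as in the proof of Lemma~\ref{riesz}. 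In particular each is a Bessel system and possesses the Riesz--Fischer property. Throughout I write $b_k := \int_0^l f(x)\sin\omega_k x\,dx$ and repeatedly use the elementary expansions $1/\omega_k = l/(\pi k) + O(\gamma_k/k^2)$ and $\cos\omega_k l = (-1)^k\cos\gamma_k l = (-1)^k(1 + O(\gamma_k^2))$.

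\emph{Necessity.} Let $f\in W^1_2[0,l]$. Integrating by parts,
\[ b_k = \frac{1}{\omega_k}\big(f(0) - (-1)^k f(l)\cos\gamma_k l\big) + \frac{1}{\omega_k}\int_0^l f'(x)\cos\omega_k x\,dx. \]
By the two expansions above the boundary term equals $\frac{l}{\pi k}\big(f(0) - (-1)^k f(l)\big) + r_k/k$ with $\{r_k\}\in l_2$, since $f(l)\,O(\gamma_k^2)\in l_2$ and $\big(f(0)-(-1)^kf(l)\big)O(\gamma_k/k)\in l_2$. As $f'\in L_2(0,l)$ and $\{\cos\omega_k x\}$ is Bessel, the sequence $\big\{\int_0^l f'\cos\omega_k x\,dx\big\}_k$ lies in $l_2$, so the second term is $s_k/k$ with $\{s_k\}\in l_2$. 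Taking $w_1=f(0)$, $w_2=f(l)$, $\tilde\gamma_k=r_k+s_k$ yields the claimed asymptotics.

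\emph{Sufficiency.} Conversely, assume the asymptotics hold with some constants $w_1,w_2$ and $\{\tilde\gamma_k\}\in l_2$. Multiplying by $\omega_k$ and using the expansions one checks that
\[ a_k := \omega_k b_k - \big(w_1 - (-1)^k w_2\cos\gamma_k l\big) \in l_2. \]
Because $\{1\}\cup\{\cos\omega_k x\}$ is a Riesz basis, the analysis map is onto $l_2$, so there is a (unique) $h\in L_2(0,l)$ with $\int_0^l h\,dx = w_2 - w_1$ and $\int_0^l h\cos\omega_k x\,dx = a_k$ for $k\ge1$. Put $F(x):=w_1+\int_0^x h(t)\,dt$, so that $F\in W^1_2[0,l]$, $F(0)=w_1$, $F(l)=w_2$, $F'=h$. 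Integrating by parts as in the necessity part and invoking the definition of $a_k$ gives $\int_0^l F\sin\omega_k x\,dx = \frac{1}{\omega_k}\big(w_1-(-1)^kw_2\cos\gamma_k l + a_k\big)=\frac{1}{\omega_k}\,\omega_k b_k = b_k$ for every $k$. Hence $\int_0^l(f-F)\sin\omega_k x\,dx=0$ for all $k\ge1$, and the completeness of $\{\sin\omega_k x\}$ forces $f=F$ a.e.; thus $f\in W^1_2[0,l]$, and by the necessity part the constants are necessarily $w_1=f(0)$, $w_2=f(l)$.

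The main obstacle is the nonharmonic-analysis input: verifying that, for an \emph{arbitrary} $\{\gamma_k\}\in l_2$, the perturbed systems $\{1\}\cup\{\cos\omega_k x\}$ and $\{\sin\omega_k x\}$ are genuinely Riesz bases. Quadratic closeness to an orthonormal system is not by itself sufficient; one must additionally establish completeness, after which Bari's theorem applies. This is precisely the step carried out by the Rouche's and Liouville's theorem argument of Lemma~\ref{riesz} for the concrete frequencies $z_n$, and the same scheme, performed for general $l_2$ perturbations, underlies the present lemma. Once these basis properties are secured, both directions reduce to the routine integration by parts and asymptotic bookkeeping indicated above.
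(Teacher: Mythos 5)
First, note that the paper contains no proof of this lemma at all: it is imported verbatim from \cite{results}, so your argument has to stand on its own merits. Your necessity direction does stand: integration by parts, the expansions of $1/\omega_k$ and $\cos\omega_k l$, and the Bessel property of $\{\cos\omega_k x\}_{k\ge1}$ (which survives quadratic closeness to an orthogonal system regardless of any completeness considerations) give exactly the stated asymptotics.

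The genuine gap is in the sufficiency direction, precisely at the point you flag and then wave through. Both of its key steps, namely the solvability of the moment problem $\int_0^l h\cos\omega_k x\,dx=a_k$ and the final implication ``$\int_0^l(f-F)\sin\omega_k x\,dx=0$ for all $k$ forces $f=F$,'' require the \emph{full} perturbed systems $\{1\}\cup\{\cos\omega_k x\}_{k\ge1}$ and $\{\sin\omega_k x\}_{k\ge1}$ to be Riesz bases, and for an arbitrary $\{\gamma_k\}\in l_2$ this is not merely unproved but false. Take $l=\pi$, $\gamma_1=1$, $\gamma_k=0$ for $k\ge2$: then $\omega_1=\omega_2=2$, so $\{\sin\omega_k x\}_{k\ge1}$ contains a repeated element and is neither minimal nor complete (its closed span misses $\sin x$), and likewise for the cosine system; yet the lemma remains true in this case, because its hypothesis is a tail condition, insensitive to any finite set of indices, which can always be absorbed into $\tilde\gamma_k$. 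This example shows that no proof resting on basis properties of the whole perturbed system can succeed: a correct argument must work with a tail $\{\sin\omega_k x\}_{k\ge N}$, where for large $N$ the frequencies are distinct and the quadratic deviation is so small that the hybrid system $\{\sin(\pi kx/l)\}_{k<N}\cup\{\sin\omega_k x\}_{k\ge N}$ is a Riesz basis by a pure perturbation bound (operator $I+K$ with $\|K\|<1$, no completeness argument needed), and must then dispose of the resulting finite-dimensional defect, e.g.\ by first subtracting a fixed $W^1_2$ function with boundary values $w_1,w_2$ and matching only tail coefficients. Your appeal to the ``same scheme'' as Lemma~\ref{riesz} cannot repair this: that argument is tied to the $z_n$ being the zeros of the explicit entire function $c_2(z^2)$, whose product representation and growth estimates furnish the bound on circles needed for Liouville's theorem; an arbitrary $l_2$ perturbation comes with no such generating function, and, as the example shows, the completeness you want to extract from it can simply fail.
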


  \begin{lemma}
 Suppose $f \in L_2(0, l).$ We have $f \in W^2_2[0, l]$ if and only if
  \begin{equation}\int_0^l  f(x) \cos z_k x \, dx = \frac{1}{z^2_k} \Big( (-1)^k v_2 - v_1 + \tilde\gamma_k \Big), \;  k \in {\mathbb N}, \quad \{\tilde \gamma_k \}_{k \ge 1} \in l_2.
  \label{l2 statement}
  \end{equation}
 Moreover, $v_1 = f'(0)$ and $v_2 = f'(l) + f(l)/d.$
\label{my lemma}
  \end{lemma}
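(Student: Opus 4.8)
I would prove the two directions separately, using Lemma~\ref{lemma buterin} as the workhorse. For \emph{necessity}, assume $f \in W^2_2[0,l]$ and integrate by parts once:
\[
\int_0^l f(x)\cos z_k x\,dx = \frac{f(l)\sin z_k l}{z_k} - \frac{1}{z_k}\int_0^l f'(x)\sin z_k x\,dx.
\]
Since $f' \in W^1_2[0,l]$ and $z_k = \pi k/l + \gamma_k$ with $\{\gamma_k\}\in l_2$ by~\eqref{z_n}, Lemma~\ref{lemma buterin} applied to $f'$ gives $\int_0^l f'(x)\sin z_k x\,dx = \frac{l}{\pi k}\big(f'(0)-(-1)^k f'(l)\big)+\tilde\gamma_k/k$ with $\{\tilde\gamma_k\}\in l_2$. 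I would then insert the elementary consequences of~\eqref{z_n}, namely $\sin z_k l = (-1)^k l/(d\pi k)+O(k^{-3})$, $\cos z_k l = (-1)^k+O(k^{-2})$, and $1/z_k = l/(\pi k)+O(k^{-3})$. The term $f(l)\sin z_k l/z_k$ produces $(-1)^k f(l)/(d z_k^2)$ up to $O(k^{-4})$, which merges with the $(-1)^k f'(l)/z_k^2$ coming from the Lemma~\ref{lemma buterin} expansion into the single coefficient $(-1)^k v_2/z_k^2$ with $v_2 = f'(l)+f(l)/d$; the non-alternating part yields $-v_1/z_k^2$ with $v_1=f'(0)$; and every remainder collapses into $\tilde\gamma_k/z_k^2$ for a new $l_2$ sequence. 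This establishes~\eqref{l2 statement} and, crucially, identifies $v_1=f'(0)$ and $v_2=f'(l)+f(l)/d$.

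For \emph{sufficiency} the plan is \emph{not} to differentiate $f$ directly, which is illegitimate before regularity is known, but to construct a function $f_*\in W^2_2[0,l]$ having the same cosine coefficients as $f$ against the entire system and then to invoke completeness. Concretely, I would build $f_*$ with $\int_0^l f_*(x)\cos z_k x\,dx = \int_0^l f(x)\cos z_k x\,dx$ for every $k\ge 0$, including the coefficient against the constant $1$. Since $\{1\}\cup\{\cos z_n t\}_{n\ge1}$ is a Riesz, hence complete, system in $L_2(0,l)$ by Lemma~\ref{riesz}, matching all coefficients forces $f=f_*$ in $L_2(0,l)$, whence $f\in W^2_2[0,l]$; the identities $v_1=f'(0)$ and $v_2=f'(l)+f(l)/d$ then follow from the necessity part already proved.

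To construct $f_*$ I would prescribe its second derivative and boundary data: set $f_*''=\phi\in L_2(0,l)$, $f_*'(0)=v_1$, and $f_*(x)=f_*(0)+v_1 x+\int_0^x(x-t)\phi(t)\,dt$, leaving $f_*(0)$ and $\phi$ free. Integrating by parts twice gives the exact identity $\int_0^l f_*\cos z_k x\,dx = f_*(l)\sin z_k l/z_k + \big(f_*'(l)\cos z_k l - f_*'(0)\big)/z_k^2 - z_k^{-2}\int_0^l\phi\cos z_k x\,dx$, so that matching the coefficients for $k\ge1$ reduces to the moment equations $\int_0^l\phi(x)\cos z_k x\,dx = b_k$, where $b_k = -\tilde\gamma_k + f_*'(l)\alpha_k + f_*(l)\beta_k$ lies in $l_2$ with fixed $\{\alpha_k\},\{\beta_k\}\in l_2$ coming from the $O(k^{-2})$ corrections in~\eqref{z_n}, provided the relation $f_*'(l)+f_*(l)/d=v_2$ is imposed, since this is exactly what annihilates the otherwise uncancelled $(-1)^k$ leading term. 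Because the system biorthogonal to the Riesz basis $\{1\}\cup\{\cos z_n t\}$ is again a Riesz basis, this moment problem is solvable for any $l_2$ right-hand side, and $\phi$ is recovered as a bounded linear function of $\{b_k\}$.

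The main obstacle I anticipate is the self-referential coupling in the last step: $f_*(l)$ and $f_*'(l)$ are themselves bounded linear functionals of $\phi$, so substituting the solution of the moment problem turns the boundary relation $f_*'(l)+f_*(l)/d=v_2$, together with the constant-coefficient ($k=0$) matching that fixes $f_*(0)$, into a finite linear system for the scalars $f_*(0),f_*(l),f_*'(l)$. Proving that this system is non-degenerate is the delicate point; I expect the special combination $v_2=f'(l)+f(l)/d$ to be precisely what keeps it consistent, the weight $1/d$ being inherited from the asymptotics $\sin z_k l/z_k\sim(-1)^k/(d z_k^2)$ forced by~\eqref{z_n}. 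Once solvability is secured, $f_*$ has by construction the same cosine coefficients as $f$ for all $k\ge0$, and completeness closes the argument.
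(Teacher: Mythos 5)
Your necessity argument is correct and is, in substance, the paper's: the paper integrates by parts twice, you integrate once and invoke the necessity half of Lemma~\ref{lemma buterin} for $f'$, and the bookkeeping with $\sin z_k l=(-1)^k l/(d\pi k)+O(k^{-3})$ that merges $f(l)/d$ into $v_2$ is exactly right. Your sufficiency strategy also coincides with the paper's: construct an auxiliary function of class $W^2_2[0,l]$ whose coefficients against the complete system $\{1\}\cup\{\cos z_n t\}_{n\ge 1}$ (Lemma~\ref{riesz}) agree with those of $f$, and conclude $f=f_*$ by completeness. The difference is only in the parametrization: the paper writes $g(x)=\int_x^l\tilde g(t)\,dt+C$ and solves sine-moment problems for $\tilde g$ (via Lemma~\ref{lemma buterin}), while you prescribe $f_*''=\phi$ and solve cosine-moment problems; both reduce to exactly one residual scalar equation in one free parameter.

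That residual scalar equation is where your proposal has a genuine gap. You correctly isolate the difficulty, but then only state that you \emph{expect} the combination $v_2=f'(l)+f(l)/d$ to make the finite system non-degenerate. It cannot: the role of that combination is exhausted in making the moment data $\{b_k\}$ an $\ell_2$-sequence (which you already used), and it has no bearing whatsoever on whether the remaining affine equation in the free scalar is solvable. Non-degeneracy is a property of the numbers $z_n$ themselves, and it is the analytic core of the sufficiency proof. In the paper's formulation it reads $\int_0^l tF_s(t)\,dt\ne l$, where $F_s$ is the $L_2(0,l)$-function with $\int_0^l F_s(t)\sin z_k t\,dt=\sin z_k l$ for $k\ge 1$; the free parameter enters the constant-coefficient matching with factor $l-\int_0^l tF_s(t)\,dt$, exactly the coefficient you would need to be nonzero. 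The paper proves this by forming the entire function $F(\lambda)=(\lambda-z_0^2)\bigl(\sin\rho l-\int_0^l F_s(t)\sin\rho t\,dt\bigr)/(\rho c_2(\lambda))$, showing it is constant by the Liouville-type argument of Lemma~\ref{riesz}, evaluating the constant ($=-1/d^2$) via the Riemann--Lebesgue lemma, and thereby identifying $F_s(t)=-\sin z_0 t/(d\sin z_0 l)$ explicitly, after which the scalar condition is a direct computation using $c_2(z_0^2)=0$. Nothing in your proposal produces this or any equivalent argument; "once solvability is secured" is precisely the step that cannot be waved through, since for the special right-hand sides generated by the boundary terms there is no structural reason for non-degeneracy — it must be verified from the specific form of $c_2$. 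Until you supply such an argument, the sufficiency half of the lemma remains unproved.
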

  \begin{proof}
  Necessity. If $f \in W^2_2[0, l],$ then we obtain formula~\eqref{l2 statement} with $v_1 = f'(0)$ and $v_2 = f'(l) + f(l)/d$ integrating by parts two times.

Sufficiency. Let formula~\eqref{l2 statement} hold.
Let us construct the function $g \in W^2_2[0, l]$ such that its coefficients with respect to the system $\{ 1 \} \cup \{ \cos z_n t \}_{n \ge 1}$ coincide with the coefficients of $f.$
We find $g$ in the form $g(x) = \int_x^l \tilde g(t) \, dt + C.$ Calculating the coefficient with respect to $\cos z_k t,$ using integration by parts, we obtain
  \begin{equation*}
  \frac{C \sin z_k l}{z_k} + \frac{1}{z_k}\int_0^l \tilde g(t) \sin z_k t \, dt = \frac{1}{z^2_k} \Big( (-1)^k v_2 - v_1 + \tilde\gamma_k \Big).
  \end{equation*}
   Therefore, $\tilde g(x) = h(x)-C F_s(x),$  where the functions $h$ and $F_s$ are uniquely determined by the equalities
    \begin{equation}
    \int_0^l h(t) \sin z_k t \, dt = \frac{1}{z_k} \Big( (-1)^k v_2 - v_1 + \tilde\gamma_k \Big), \quad  \int_0^l F_s(t) \sin z_k t \, dt = \sin z_k l, \quad k \ge 1.
      \label{lemma def}
    \end{equation}
    Formula~\eqref{z_n} yields that $\{ k(\sin z_k l-(-1)^k l/(d\pi k)) \}_{k \ge 1} \in l_2.$ By Lemma~\ref{lemma buterin}, the inclusions $h, F_s \in W^1_2[0, l]$ hold. To construct the required $g \in W^2_2[0, l],$ it remains to choose $C$ such that
\begin{equation*}
\int_0^l f(t) \, dt = \int_0^l g(t) \, dt = \int_0^l \int_x^b h(t) \, dt - C \int_0^l t F_s(t) \, dt + Cl.
\end{equation*}
Such constant $C$ exists if $\int_0^l t F_s(t) \, dt \ne l.$

Let us prove that $\int_0^l t F_s(t) \, dt \ne l.$ Assume the converse. Construct the entire function
\begin{equation*}
F(\lambda) = \frac{(\lambda - z_0^2) (\sin \rho l - \int_0^l F_s(t) \sin \rho t \, dt)}{\rho c_2(\lambda)},
\end{equation*}
where $\{ z_n^2 \}_{n \ge 0}$ denotes the sequence of $c_2(\lambda)$ zeros.  Proceeding analogously to the proof of Lemma~\ref{riesz}, we get $F(\lambda) \equiv C,$ which infers $\sin \rho l -\int_0^l F_s(t) \sin \rho t \, dt \equiv C \rho c_2(\lambda)/(\lambda - z_0^2).$ Substituting $\rho = \pi n/l + \pi/(2l)$ into the previouos equality, using the Riemann--Lebesgue lemma, we compute $C = -1/d^2.$ Hence
\begin{equation}
\int_0^l F_s(t) \sin \rho t \, dt \equiv \frac{1}{d^2(\lambda - z_0^2)} \big[(1-z_0^2) \sin \rho l + d \rho \cos \rho l \big].
\label{equiv2}
\end{equation}
It is clear that $F_s(t)$ is uniquely determined by the coefficients
\begin{equation*}
\int_0^l F_s(t) \sin \frac{\pi n}{l}t \, dt =  \frac{\pi n}{d l} \frac{(-1)^n}{(\frac{\pi n}{l})^2 -  z^2_0}, \; z_0 \ne \frac{\pi n}{l}, \quad n \in \mathbb N.
\end{equation*}
By virtue of the system $\{ \sin \pi n t/l \}_{n \ge 1}$ completeness, we have $F_s(t) = -\sin z_0 t /(d \sin z_0 l).$ Substituting this function into~\eqref{equiv2}, we arrive at a contradiction. Therefore, $\int_0^l t F_s(t) \, dt \ne l.$

We have proved that there exists a function $g\in W^2_2[0, l]$ having the same coefficients with respect to the system $\{ 1 \} \cup \{ \cos z_n t \}_{n \ge 1}$ as $f$ does.
Since this system is complete, we obtain $f = g.$ Comparing~\eqref{l2 statement} with the formula written by necessity, we arrive at $v_1 = f'(0)$ and $v_2 = f'(l) + f(l)/d.$
  \end{proof}

\begin{thm} For the function $W(t)$ from \eqref{sufficiency Delta} to have the form~\eqref{W} under conditions (A)--(B),
the following asymptotic formulae are sufficient:
\begin{equation}\Delta\bigg(\Big(\frac{\pi n}{l}\Big)^2\bigg) = -d + d \bigg(\frac{l}{\pi n}\bigg)^2 \big[c - (-1)^n u +  \kappa_n\big], \quad c := -\frac{2(C_0+1)}{d^2},
\label{con1}\end{equation}
\begin{equation}\Delta(z_n^2) =
 \frac{\sin z_n l}{z_n^3} \Big[ h_2 - (-1)^n h_1 + \eta_n \Big]
 \label{con2}\end{equation}
with some $h_1, h_2 \in \mathbb{C}$ and sequences $\{ \kappa_n \}_{n \ge 1}, \{ \eta_n \}_{n \ge 1} \in l_2.$
\end{thm}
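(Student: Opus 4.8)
The plan is to construct $g$, $G_1$, $G_2$ from the given $W$ by the reconstruction formulae~\eqref{gg} and to show that, under hypotheses~\eqref{con1}--\eqref{con2}, these functions meet the regularity and boundary requirements of (A)--(B); representation~\eqref{W} will then follow automatically. Indeed, set $g(t)=\frac{1}{2d}\{W(l-t)-W(l+t)\}$, $G_1(t)=\int_t^l g(w)\,dw$, and define $G_2$ by the third line of~\eqref{gg}. Once $g\in W^1_2[0,l]$ is known (so that $g'$ is meaningful), substituting the definition of $G_2$ back into the right-hand side of~\eqref{W} reproduces $W$ on $[l,2l]$ by construction, while on $[0,l)$ it collapses to $2dg(l-t)+W(2l-t)=W(t)$ by the very definition of $g$. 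Hence it suffices to prove that $g\in W^1_2[0,l]$ with $g(0)=u$ and $g(l)=c$ (this is (B) together with the $g$-part of (A)), that $G_1(z)=\int_z^l g(t)\,dt$ (true by definition), and that $G_2\in W^2_2[0,l]$.

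For the function $g$ I would substitute $\rho=\pi n/l$ into~\eqref{sufficiency Delta}. There $\sin 2\rho l=\sin\rho l=0$, so $\tilde\Delta=-d$ and the $C_0$-term drops out, leaving $\Delta((\pi n/l)^2)=-d+\frac{l}{2\pi n}\int_0^{2l}W(t)\sin\frac{\pi n}{l}t\,dt$. Reflecting the integral about $t=l$ and using $W(l+s)-W(l-s)=-2dg(s)$, one gets $\int_0^{2l}W\sin\frac{\pi n}{l}t\,dt=-2d(-1)^n\int_0^l g(s)\sin\frac{\pi n}{l}s\,ds$, whence $\Delta((\pi n/l)^2)=-d-\frac{dl(-1)^n}{\pi n}\int_0^l g\sin\frac{\pi n}{l}s\,ds$. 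Comparing this with~\eqref{con1} gives $\int_0^l g(s)\sin\frac{\pi n}{l}s\,ds=\frac{l}{\pi n}\bigl(u-(-1)^n c\bigr)+\frac{\tilde\kappa_n}{n}$ with $\{\tilde\kappa_n\}\in l_2$. This is precisely the asymptotics of Lemma~\ref{lemma buterin} (with $\gamma_k\equiv 0$), so $g\in W^1_2[0,l]$, $g(0)=u$, $g(l)=c$, which establishes (B).

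For $G_2$ I would substitute $\rho=z_n$ into~\eqref{sufficiency Delta}, plug in the now-justified form~\eqref{W} of $W$, and again split the integral at $t=l$ and reflect. The relation $c_2(z_n^2)=0$, i.e. $d\cos z_n l=(d^2 z_n-1/z_n)\sin z_n l$, together with the zero asymptotics~\eqref{z_n} (which yield $z_n\sin z_n l=(-1)^n/d+O(n^{-2})$), is used to collapse all contributions coming from $g$, $G_1$, and $g'$: this is the direct analogue of the fact that in~\eqref{2} the coefficient of $\int_0^l q\sin\rho t$ equals $-c_2(\lambda)/\rho$ and hence vanishes at $\rho=z_n$. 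After these cancellations the principal surviving term is $\frac{\sin z_n l}{z_n}\int_0^l G_2(t)\cos z_n t\,dt$ (the term produced by one integration by parts in $\int_0^l q(b-t)\sin\rho t\,dt$, since $q(b-t)=G_2'(t)$ on $[0,l]$), while every remaining term is $O(\sin z_n l\,z_n^{-3})$ with an $l_2$-summable factor. Matching the result with~\eqref{con2} yields $\int_0^l G_2(t)\cos z_n t\,dt=z_n^{-2}\bigl((-1)^n v_2-v_1+\tilde\gamma_n\bigr)$, $\{\tilde\gamma_n\}\in l_2$, which is exactly condition~\eqref{l2 statement} of Lemma~\ref{my lemma} (the $(-1)^n$-factor only relabels $v_1,v_2$ in terms of the free constants $h_1,h_2$); hence $G_2\in W^2_2[0,l]$, completing (A).

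The main obstacle is this last step: one must track, at order $\sin z_n l\,z_n^{-3}$, the exact cancellation of all $g$-, $G_1$- and $g'$-dependent terms after substituting~\eqref{W} into~\eqref{sufficiency Delta} at $\rho=z_n$. This requires the fine $W^1_2$-asymptotics of the $g$-coefficients from the previous step (mere $g\in L_2$ is insufficient) and the second-order expansion of $z_n$ from~\eqref{z_n}; the bookkeeping is delicate because several terms are individually of the larger order $n^{-2}$ and cancel only in combination, leaving the genuinely $n^{-4}$-small cosine coefficient of $G_2$ as the principal part.
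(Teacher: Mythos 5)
Your proposal is correct and follows essentially the same route as the paper: construct $g,$ $G_1,$ $G_2$ by~\eqref{gg} (noting that~\eqref{W} then holds identically), substitute $\rho=\pi n/l$ into~\eqref{sufficiency Delta} and apply Lemma~\ref{lemma buterin} to get $g\in W^1_2[0,l]$ with $g(0)=u,$ $g(l)=c,$ then substitute $\rho=z_n,$ exploit $c_2(z_n^2)=0$ and integration by parts, and apply Lemma~\ref{my lemma} to conclude $G_2\in W^2_2[0,l].$ The only cosmetic difference is that the paper applies Lemma~\ref{my lemma} to the combination $f(t)=-G_2(l-t)-2G_1(t)-dg(l)$ rather than to $G_2$ alone, whereas you expand the $G_1$- and constant-contributions explicitly; these contributions are of the form constants plus $l_2$-terms over $z_n^2$ (not purely $l_2$-small, as your wording suggests), but they are absorbed into the free constants $h_1,$ $h_2$ exactly as your relabeling remark anticipates, so the argument goes through.
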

\begin{proof} Let us construct the functions $g$ and $G_2$ via formulae~\eqref{gg}. It is sufficient to prove that the inclusions $g \in W^1_2[0, l]$ and $G_2 \in W^2_2[0, l]$ hold, and besides $g(0) = u,$ $g(l) = c.$

  Substituting $\rho=\pi n/l$ into \eqref{sufficiency Delta}, we obtain
\begin{equation*}\Delta\bigg(\Big(\frac{\pi n}{l}\Big)^2\bigg)=-d + \frac{l}{2 \pi n} \int_0^{2l} W(t) \sin \frac{\pi n}{l}t \, dt.
\end{equation*}
Splitting the previous integral into two, we have
 \begin{equation*}\int_0^{l} W(l-t) \sin \frac{\pi n}{l}(l-t)\,dt+ \int_0^{l} W(l+t) \sin \frac{\pi n}{l}(l+t) \, dt= (-1)^{n+1} \int_0^{l} 2d g(t) \sin \frac{\pi n}{l}t \, dt.\end{equation*}
Therefore,~\eqref{con2} yields the asymptotic
\begin{equation*}\int_0^l g(t) \sin \frac{\pi n}{l}t \, dt = \frac{l}{\pi n}[u - (-1)^n c - (-1)^n \kappa_n].\end{equation*}
  By Lemma~\ref{lemma buterin}, $g \in W^1_2[0, l]$ and $g(0) = u,$ $g(l) = c.$

Substituting $\rho=z_n$ into~\eqref{sufficiency Delta}, we have
\begin{equation*}\Delta(z_n^2) = \frac{(C_0+1) \sin 2z_n l}{z_n} + d^2 u \frac{\sin z_n l}{z_n} + \frac{\sin^2 z_n l}{dz_n^2} + \frac{1}{2z_n} \int_0^{2l} W(t) \sin z_n t\, dt.
\end{equation*}
We transform the integral as follows:
\begin{multline*}\frac12\int_0^{2l} W(t) \sin z_n t\, dt =\frac12\int_0^{l} W(t) \sin z_n t\, dt + \frac12\int_0^{l} W(2l - t) \sin z_n(2l - t) \, dt \\
=\int_0^l d g(l-t) \sin z_n t \, dt + \sin z_n l \int_0^l W(2l - t)  \cos z_n(l-t) \, dt \\
= \sin z_n l\int_0^l W(l+t) \cos z_n t\, dt + d \int_0^l g(t) \sin z_n(l-t) \, dt \\
=\sin z_n l\int_0^l W(l+t) \cos z_n t \, dt + d \sin z_n l \int_0^l g(t) \cos z_nt \, dt - d \cos z_n l \int_0^l g(t) \sin z_nt \, dt.\end{multline*}
Making the substitution $\cos z_n l = [d z_n - 1/(d z_n)] \sin z_n l$ in the last term and integrating by parts, we obtain
\begin{equation*} \Big(d^2 z_n - \frac{1}{z_n}\Big) \int_0^l g(t) \sin z_n t \, dt = d^2\bigg[u - c \cos z_n l  + \int_0^l g'(t) \cos z_n t \, dt\bigg] + \int_0^l G_1(t) \cos z_n t\, dt.\end{equation*}
Finally, taking into account~\eqref{gg} and $c = -2(C_0+1)/d^2,$ we have
\begin{equation*}
\Delta(z_n^2) = \frac{\sin^2 z_n l}{dz_n^2} + \frac{\sin z_n l}{z_n} \int_0^{l} f(t) \cos z_n t\, dt, \quad f(t) = -G_2(l-t) - 2 G_1(t) - d g(l).
\end{equation*}
Equating the obtained expression to the right side in~\eqref{con2} and dividing by $z_n^{-1}\sin z_nl,$ we arrive at the formula
\begin{equation*}
\int_0^l f(t) \cos z_n t \, dt =  \frac{1}{z_n^2} \Big[ h_2 - (-1)^n h_1 + \eta_n \Big] - \frac{\sin z_n l}{z_n d} = \frac{1}{z_n^2} \Big[ h_2 - (-1)^n \Big(h_1+\frac{1}{d^2}\Big) + \tilde\eta_n \Big], \; n \ge 1,
\end{equation*}
 where $\{ \tilde \eta_n\}_{n \ge 1} \in l_2.$
Applying Lemma~\ref{my lemma}, we obtain $f \in W^2_2[0, l]$ and $G_2 \in W^2_2[0, l].$
\end{proof}
From Proposition~\ref{necessity con} it follows that asymptotic formulae~\eqref{con1} and~\eqref{con2} are necessary for $\Delta(\lambda)$ to be the characteristic function. Based on the above-mentioned, we formulate the following result.
\begin{thm} \label{conditions}
For a sequence $\{ \lambda_n \}_{n \ge 0}$ to be the spectrum of some boundary value problem~\eqref{*}--\eqref{Dirichlet} in case~\eqref{case}, the following conditions are necessary and sufficient:

1) asymptotic formulae~\eqref{sufficiency asymp} hold;

2) the function constructed via formula~\eqref{sufficiency Delta} satisfies~\eqref{con1},~\eqref{con2};

3) condition (C): the function $G_2$ constructed by formulae~\eqref{W recovery},~\eqref{gg} vanishes at $b.$
\end{thm}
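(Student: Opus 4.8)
The plan is to read off both directions of the equivalence from the chain of results already established, so that the argument is an organized assembly rather than new analysis. I would treat necessity and sufficiency separately, in each case matching the reconstructed objects $g$, $G_2$ against the genuine ingredients of representation~\eqref{1}.

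\textbf{Necessity.} Suppose $\{\lambda_n\}_{n \ge 0}$ is the spectrum of~\eqref{*}--\eqref{Dirichlet} for some $q$ satisfying~\eqref{case}. The spectral asymptotics~\eqref{asymp} are exactly~\eqref{sufficiency asymp} with $u = q(0)$, so 1) holds. Next, representation~\eqref{1} shows that $\Delta(\lambda)$ has the form~\eqref{sufficiency Delta}, and substituting $\lambda = (\pi n/l)^2$ and $\lambda = z_n^2$ as in Proposition~\ref{necessity con} produces precisely~\eqref{con1} and~\eqref{con2}, where the constants agree with the boundary values of $q$ (in particular $c = q(l)$ and $u = q(0)$); thus 2) holds. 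Finally, the function $G_2$ reconstructed from the data by~\eqref{W recovery}--\eqref{gg} must, by uniqueness of $W$, coincide with the function $Q_2$ occurring in the true representation~\eqref{1}, and $Q_2(l) = \int_0^{0} q(a+t)\,dt = 0$; hence condition (C) of 3) is satisfied.

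\textbf{Sufficiency.} Conversely, assume 1)--3). From 1) I would build $\Delta(\lambda)$ by~\eqref{prod}; the preceding theorem then guarantees representation~\eqref{sufficiency Delta} with some $W \in L_2(0,2l)$ and $C_0 \in \mathbb{C}$, the function $W$ being recovered explicitly via~\eqref{W recovery}. Condition 2) lets me invoke the theorem built on Lemmas~\ref{lemma buterin} and~\ref{my lemma}: it yields that $W$ has form~\eqref{W} with $g \in W^1_2[0,l]$ and $G_2 \in W^2_2[0,l]$ satisfying (A) and (B), where $g$ and $G_2$ are exactly the functions produced by~\eqref{gg}. Condition 3) supplies the remaining requirement (C). Having (A)--(C), I define the potential by $q(t) = g(t)$ on $[0,\gamma]$ and $q(t) = G_2'(l-t+a)$ on $[a,b]$; by (A) this lies in $W^1_2[0,\gamma] \cap W^1_2[a,b]$. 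Comparing~\eqref{1} with~\eqref{sufficiency Delta} under (B)--(C) then shows that the characteristic function of~\eqref{*}--\eqref{Dirichlet} with this $q$ equals $\Delta(\lambda)$, whence $\{\lambda_n\}_{n \ge 0}$ is its spectrum.

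The delicate point is not any single citation but the consistency of the bookkeeping: one must check that the constants $c$, $u$, $h_1$, $h_2$ entering~\eqref{con1}--\eqref{con2} correspond to the same boundary values of $q$ that govern conditions (A)--(C), so that the pair $(g, G_2)$ delivered by~\eqref{gg} is simultaneously the pair feeding the potential and the pair realizing~\eqref{W}. I expect the main obstacle to be recognizing that condition (C) is a genuinely independent requirement, not implied by the asymptotics 1) together with the refined relations 2); this is precisely the phenomenon, flagged in the introduction, that the spectrum characterization here is not reducible to asymptotics alone.
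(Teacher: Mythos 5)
Your proof is correct and follows essentially the same route as the paper, which establishes Theorem~\ref{conditions} precisely by assembling the preceding results: asymptotics~\eqref{asymp} for condition 1), Proposition~\ref{necessity con} for condition 2), the representation theorem giving~\eqref{sufficiency Delta} via~\eqref{prod}, the theorem deducing form~\eqref{W} with (A)--(B) from~\eqref{con1}--\eqref{con2}, and the ``Conversely'' paragraph defining $q$ from $g$ and $G_2'$ for sufficiency. Your explicit necessity argument for condition (C) --- identifying the reconstructed $G_2$ with $Q_2$ through the uniqueness of $W$ and the inversion~\eqref{gg}, and noting $Q_2(l)=0$ --- is exactly the step the paper leaves implicit, and it is sound.
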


{\bf Aknowledgement.} This work was financially supported by project no.~19-01-00102 of the Russian Foundation for Basic Research.

\end{document}